\documentclass[12pt]{article}
\usepackage[utf8]{inputenc}
\usepackage{amsmath}
\usepackage{amssymb}
\usepackage{mathrsfs}
\usepackage{amsthm}
\usepackage{cite}
\usepackage{hyperref}
\usepackage{enumerate}
\usepackage{combelow}
\usepackage[left=0.7in,right=0.7in,top=0.7in,bottom=0.7in]{geometry}
\usepackage{titlesec}
\titleformat*{\section}{\large\bfseries}
\newtheorem{theorem}{Theorem}[section]
\newtheorem{lemma}[theorem]{Lemma}
\newtheorem{corollary}[theorem]{Corollary}
\newtheorem{definition}[theorem]{Definition}
\newtheorem{example}[theorem]{Example}

\numberwithin{equation}{section}
\title{Coupled best proximity point theorems for $p$-cyclic $\phi$-contraction and $p$-cyclic Kannan nonexpansive mappings}
\author{\large  Parveen Kumar$^1$ and Ankit Kumar$^2$\footnote{Corresponding author} \\
{\small $^1$,$^2$Department of Mathematics, Punjab Engineering College (Deemed to be University)  }\\
{\small Chandigarh-160012, India}\\{\small E-mail: $^1$parvveenbeniwal11@gmail.com, $^2$ankitkumar@pec.edu.in}\\ 
 }
\date{}
\begin{document}

\maketitle
\begin{abstract}
In this paper, the notions of $p$-cyclic $\phi$-contraction and $p$-cyclic Kannan nonexpansive mappings are introduced, and the existence of coupled best proximity points for such mappings is established.\\
\textbf{Mathematics Subject Classification.} 47H10, 47H09, 41A65.\\
\textbf{Keywords.} Coupled best proximity point, $p$-cyclic $\phi$-contraction mapping, $p$-cyclic Kannan nonexpansive mapping. 
\end{abstract}

\section{Introduction and preliminaries}
Throughout this paper, the set of natural numbers is denoted by $\mathbb{N}$. Let $A$ and $B$ be two nonempty subsets of Banach space $X$. We define
\begin{align*}
dist(A,B) & = \inf\{\Vert a-b \Vert: a \in A \thinspace \thinspace \mbox{ and } \thinspace b \in B \},\\
A_0 &=  \{ a \in A :\Vert a-b \Vert =dist(A,B) \thinspace \thinspace \mbox{for some} \thinspace b \in B \},\\ 
B_0 & =  \{ b \in B :\Vert a-b \Vert =dist(A,B) \thinspace \thinspace \mbox{for some} \thinspace a \in A \}.
  \end{align*}
  
Let $S:(A \times B) \cup (B \times A) \rightarrow A \cup B$ be a mapping. Then $(a,b) \in A \times B$ is said to be a coupled best proximity point of $S$ if it satisfies $\Vert a-T(a,b)\Vert=dist(A,B)$ and $\Vert b-T(b,a) \Vert=dist(A,B)$. Interestingly, under the assumption that $A \cap B = \emptyset$ this extends the concept of a coupled fixed point \cite{7}. When $ A \cap B \neq \emptyset $, the notion of a coupled best proximity point reduces to a coupled fixed point.

Best approximation results ensure the existence of approximate solutions, though they may not always be optimal. In contrast, best proximity point results yield optimal approximate solutions. Several authors (see \cite{2,3,6,11,12,13,1} have studied best proximity points for various contraction and nonexpansive mappings in the setting of metric spaces and  Banach spaces. The following result by Kirk et al. \cite{10} ensures the nonemptiness of $A_0$ and $B_0$.
\begin{lemma}
\cite[Lemma 3.2]{10} Let $A$ and $B$ be nonempty, closed and convex subsets of a reflexive Banach space $X$. Suppose that $A$ is bounded. Then $A_0$ and $B_0$ are nonempty.
\end{lemma}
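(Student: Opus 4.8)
The plan is to show that the infimum defining $dist(A,B)$ is actually attained by some pair $(a,b)\in A\times B$; such a pair immediately witnesses $a\in A_0$ and $b\in B_0$, so both sets are nonempty. The tools are a minimizing sequence together with weak compactness in the reflexive space.

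First I would choose sequences $(a_n)\subseteq A$ and $(b_n)\subseteq B$ with $\Vert a_n-b_n\Vert\to dist(A,B)$. The key preliminary observation is that both sequences are bounded: since $A$ is bounded there is $M$ with $\Vert a_n\Vert\le M$ for all $n$, and $(\Vert a_n-b_n\Vert)$ is convergent, hence bounded, so $\Vert b_n\Vert\le\Vert a_n\Vert+\Vert a_n-b_n\Vert$ is bounded too. This is the one place where the hypothesis ``$A$ bounded'' is used; note that $B$ need not be bounded.

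Next, since $X$ is reflexive, bounded sequences are relatively weakly compact (Eberlein--\v{S}mulian, or directly Kakutani's characterization of reflexivity). Passing to a subsequence, $a_n\rightharpoonup a$ for some $a\in X$, and passing to a further subsequence, $b_n\rightharpoonup b$ for some $b\in X$. Because $A$ and $B$ are convex and closed, they are weakly closed (Mazur's theorem), so $a\in A$ and $b\in B$. Then $a_n-b_n\rightharpoonup a-b$, and since the norm is weakly lower semicontinuous (being convex and strongly continuous), $\Vert a-b\Vert\le\liminf_n\Vert a_n-b_n\Vert=dist(A,B)$. The reverse inequality $\Vert a-b\Vert\ge dist(A,B)$ holds by definition of the infimum, so $\Vert a-b\Vert=dist(A,B)$, giving $a\in A_0$ and $b\in B_0$.

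The only genuinely delicate point is the weak-limit extraction: one must invoke that bounded sets in a reflexive space are relatively weakly compact and that closed convex sets are weakly closed, and combine these with the boundedness of $(b_n)$ deduced above. Everything else is routine, so I expect the write-up to be short.
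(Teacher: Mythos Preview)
Your argument is correct and is the standard proof of this fact: extract weak subsequential limits of a minimizing pair of sequences using reflexivity, locate the limits in $A$ and $B$ via Mazur's theorem, and conclude by weak lower semicontinuity of the norm. There is nothing to fix.

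As for comparison: the paper does not supply its own proof of this lemma. It is quoted verbatim as \cite[Lemma 3.2]{10} and used as a black box, so there is no ``paper's proof'' to compare against. Your write-up is exactly the argument one finds in the cited source (Kirk--Reich--Veeramani), and the only point worth flagging is a tiny technicality you already handled implicitly: when you pass to a further subsequence to get $b_n\rightharpoonup b$, you should keep the same indices for the $a_n$'s so that $a_n-b_n\rightharpoonup a-b$ along one common subsequence. With that understood, the proof is complete.
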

A Banach space $X$ is said to be uniformly convex if there exists a strictly increasing function $\delta:(0,2] \rightarrow [0,1]$ such that for all $x,y,x \in X$, $R>0$ and $r \in [0,2R]$ satisfying $\Vert x-z \Vert \leq R$, $\Vert y-z \Vert \leq R$ and $\Vert x-y \Vert \geq r$, we have $\Big\Vert \frac{x+y}{2}-z \Big\Vert \leq \Big(1-\delta(\frac{r}{R} \Big) \Big)R$. Let $(X, \Vert . \Vert)$ a Banach space. Define a norm on $X \times X$ by $\Vert (x,y) \Vert = \max \{\Vert x \Vert,\Vert y \Vert\}$.
 
Al-Thagafi and  Shahzad \cite{15} introduced the notion of cyclic  $\phi$-contraction mappings and studied the existence of a best proximity point of such mappings.
\begin{definition}
\emph{\cite[Definition 1]{15} Let $A$ and $B$ be nonempty subsets of a metric space $(X,d)$ and let $\phi: [0,\infty) \rightarrow [0, \infty)$ is strictly increasing mapping. A mapping $T:A \cup B \rightarrow A \cup B$ is called cyclic $\phi$-contraction  if the following conditions are satisfied:}

\emph{(i) $T(A) \subset B$ and $T(B) \subset A$,}

\emph{(ii) $d(Tx,Ty) \leq d(x,y)-\phi(d(x,y))+\phi(dist(A,B))$ for all $x \in A$ and $y \in B$.}
\end{definition}
\begin{theorem}
\cite[Theorem 8]{15} Let $A$ and $B$ be nonempty subsets of a uniformly convex Banach space $X$ such that $A$ is closed and convex and let $T:A \cup B \rightarrow A \cup B$ be a cyclic $\phi$-contraction mapping. For $x_0 \in A$, define $x_{n} = Tx_{n-1}$ for each $n \geq \mathbb{N}$. Then there exists unique $x \in A$ such that $x_{{2n}} \rightarrow x$, $T^2x =x$ and $\Vert x-Tx \Vert = dist(A,B)$.
\end{theorem}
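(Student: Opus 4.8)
The plan is to run the standard argument for best proximity points in uniformly convex Banach spaces, adapted to the $\phi$-contraction hypothesis. Write $d:=dist(A,B)$ and $d_n:=\Vert x_n-x_{n+1}\Vert$. Since $x_{2n}\in A$ and $x_{2n+1}\in B$, condition (ii) (with the symmetry of the norm) applies with one argument in $A$ and the other in $B$, so $d_{n+1}\le d_n-\phi(d_n)+\phi(d)$ for all $n$; as $\phi$ is strictly increasing, $d_n>d$ forces $d_{n+1}<d_n$, hence $(d_n)$ decreases to some $L\ge d$, and letting $n\to\infty$ with $\phi(d_n)\ge\phi(L)$ gives $\phi(L)\le\phi(d)$, so $L=d$ and $d_n\downarrow d$. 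I would then record two consequences of uniform convexity together with the convexity of $A$: \emph{(a)} if $(u_n),(z_n)\subseteq A$ and $(w_n)\subseteq B$ satisfy $\Vert u_n-w_n\Vert\to d$ and $\Vert z_n-w_n\Vert\to d$, then $\Vert u_n-z_n\Vert\to 0$, since $\tfrac{u_n+z_n}{2}\in A$ lies at distance $\ge d$ from $w_n$, which violates the uniform convexity estimate unless $\Vert u_n-z_n\Vert\to 0$; taking $(u_n,z_n,w_n)=(x_{2n},x_{2n+2},x_{2n+1})$ yields $\Vert x_{2n}-x_{2n+2}\Vert\to 0$, and likewise $\Vert x_{2n+1}-x_{2n+3}\Vert\to 0$; and \emph{(b)} at most one point of $A$ lies at distance $d$ from a given point of $B$ (the same midpoint argument).

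The heart of the proof, and the step I expect to be the main obstacle, is the Cauchy property of $(x_{2n})$. The crucial intermediate claim is: for every $\varepsilon>0$ there is $N$ such that $\Vert x_{2m}-x_{2n+1}\Vert\le d+\varepsilon$ whenever $m>n\ge N$. I would argue by contradiction: if this fails for some $\varepsilon_0>0$, then for each large $k$ one can choose $n_k\ge k$ and the minimal $m_k>n_k$ with $\Vert x_{2m_k}-x_{2n_k+1}\Vert>d+\varepsilon_0$; minimality together with $\Vert x_{2m_k}-x_{2m_k-2}\Vert\to 0$ forces $\Vert x_{2m_k}-x_{2n_k+1}\Vert\to d+\varepsilon_0$, and pushing this one step back through $\Vert x_{2m_k-2}-x_{2n_k-1}\Vert$ (using $\Vert x_{2n_k-1}-x_{2n_k+1}\Vert\to 0$) gives $\Vert x_{2m_k-1}-x_{2n_k}\Vert\to d+\varepsilon_0$ as well. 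Feeding $x_{2m_k}=Tx_{2m_k-1}$ and $x_{2n_k+1}=Tx_{2n_k}$ into (ii) gives $\phi(\Vert x_{2m_k-1}-x_{2n_k}\Vert)\le\Vert x_{2m_k-1}-x_{2n_k}\Vert-\Vert x_{2m_k}-x_{2n_k+1}\Vert+\phi(d)$, whose right-hand side tends to $\phi(d)$ while its left-hand side is eventually $\ge\phi(d+\varepsilon_0/2)>\phi(d)$ by strict monotonicity --- the desired contradiction. Granting the claim, Cauchyness of $(x_{2n})$ follows as in \emph{(a)}: for $m>n\ge N$ the points $x_{2m},x_{2n}$ of $A$ are both within $d+\varepsilon$ of $x_{2n+1}\in B$ (using also $d_{2n}\le d+\varepsilon$), the midpoint $\tfrac{x_{2m}+x_{2n}}{2}\in A$ is at distance $\ge d$ from $x_{2n+1}$, and uniform convexity --- with $\varepsilon$ chosen small --- makes $\Vert x_{2m}-x_{2n}\Vert$ small.

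It remains to identify the limit. As $A$ is closed in the complete space $X$, $x:=\lim x_{2n}$ exists in $A$. From $d\le\Vert x-x_{2n+1}\Vert\le\Vert x-x_{2n}\Vert+d_{2n}\to d$ we get $\Vert x-x_{2n+1}\Vert\to d$; applying (ii) to $x\in A$ and $x_{2n+1}\in B$ gives $\Vert Tx-x_{2n+2}\Vert\le\Vert x-x_{2n+1}\Vert\to d$, and since $x_{2n+2}\to x$ this yields $\Vert x-Tx\Vert=d=dist(A,B)$. Applying (ii) to $x\in A$ and $Tx\in B$ gives $\Vert Tx-T^2x\Vert\le\Vert x-Tx\Vert-\phi(\Vert x-Tx\Vert)+\phi(d)=d$, hence $\Vert Tx-T^2x\Vert=d$; as $x$ and $T^2x$ are then two points of $A$ at distance $d$ from $Tx\in B$, \emph{(b)} forces $T^2x=x$. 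The point $x$ is unique as the limit of $(x_{2n})$; and for the stronger uniqueness, any $y\in A$ with $T^2y=y$ and $\Vert y-Ty\Vert=d$ satisfies, after two applications of (ii), $\phi(\Vert x-Ty\Vert)+\phi(\Vert y-Tx\Vert)\le 2\phi(d)$ with both summands $\ge\phi(d)$, so $\Vert x-Ty\Vert=d=\Vert y-Ty\Vert$ and \emph{(b)} gives $x=y$. The degenerate case $dist(A,B)=0$ is treated separately and is easier, $T$ then being an ordinary $\phi$-contraction with a fixed point.
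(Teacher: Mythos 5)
This statement is quoted from \cite[Theorem 8]{15} as a preliminary; the paper itself gives no proof of it, so there is nothing internal to compare against except the coupled analogues in Section 2 (Theorems \ref{theorem2.3}--\ref{theorem2.10}), whose architecture your argument reproduces exactly in the single-variable setting: successive distances decrease to $dist(A,B)$, even-gap differences vanish by the midpoint/uniform-convexity lemma, the key claim $\Vert x_{2m}-x_{2n+1}\Vert<dist(A,B)+\epsilon$ is proved by contradiction using the $\phi$-contraction inequality and strict monotonicity of $\phi$, and the limit is identified and shown unique via the ``two points of $A$ equidistant $dist(A,B)$ from a point of $B$ coincide'' observation. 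All of those steps, including the uniqueness computation $\phi(\Vert x-Ty\Vert)+\phi(\Vert y-Tx\Vert)\le 2\phi(dist(A,B))$ and the separate treatment of $dist(A,B)=0$, check out.

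The one genuine gap is in your claims $\Vert x_{2n+1}-x_{2n+3}\Vert\to 0$ and $\Vert x_{2n_k-1}-x_{2n_k+1}\Vert\to 0$, which you invoke when ``pushing back one step'' in the contradiction argument. These are differences of points of $B$, and your lemma \emph{(a)} applies to them only if the midpoint $\tfrac{x_{2n+1}+x_{2n+3}}{2}$ is guaranteed to lie at distance at least $dist(A,B)$ from $A$, i.e.\ only if $B$ is convex. The statement as quoted here grants convexity of $A$ alone, and the cyclic condition (ii) gives no continuity of $T$ on $A\times A$ that could substitute for it; so under the hypotheses as written this step does not go through. The defect is really in the quotation: the source result (like Eldred--Veeramani's Theorem 3.10 for cyclic contractions) takes both $A$ and $B$ closed and convex, and with that hypothesis restored your proof is complete. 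You should either add that hypothesis explicitly or supply an argument for the odd-indexed differences that avoids convexity of $B$. Note that the paper commits the identical slip in Lemma \ref{lemma2.5}, where only $A$ is assumed convex yet $\Vert (x_{2n+3},y_{2n+3})-(x_{2n+1},y_{2n+1})\Vert\to 0$ is asserted ``similarly''; its main theorems repair this by assuming both sets convex.
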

Gabeleh\cite{17} introduced the notion of cyclic Kannan nonexpansive mapping and studied best proximity point of such mappings without any geometric property.
\begin{definition}
\emph{\cite[Definition 2.2]{17} Let $(A,B)$ be a nonempty pair in a normed linear space $X$. A mapping $T :A \cup B \rightarrow A \cup B $ is said to be cyclic Kannan nonexpansive mapping if it satisfies the following conditions:}

\emph{(i) $T(A) \subseteq B$ and $T(B) \subseteq A$,}
  
\emph{(ii) $\Vert Tx-Ty \Vert \leq \frac{1}{2} \big [ \Vert x - Tx \Vert + \Vert y- Ty \Vert ]$  for all $(x,y) \in A \times B$.}
\end{definition}
\begin{theorem}
\cite[Theorem 3.1]{17} Let $(A,B)$ be a nonempty, weakly compact and convex pair of subsets of a normed linear space $X$. Suppose $T:A \cup B \rightarrow A \cup B$ is a cyclic Kannan nonexpansive mapping such that  $\Vert T^2x - Tx \Vert < \Vert x-Tx \Vert$ for all $x \in A \cup B$ with $dist(A,B)<\Vert x-Tx \Vert$. Then $T$ has a best proximity point.
 \end{theorem}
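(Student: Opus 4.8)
The plan is to run the minimal invariant set argument, with the displacement hypothesis on $\|T^{2}x-Tx\|$ playing the role that proximal normal structure plays for genuinely nonexpansive cyclic maps. Write $d=dist(A,B)$; we want a point $x\in A\cup B$ with $\|x-Tx\|=d$. First, let $\mathcal{F}$ be the family of all pairs $(E,F)$ with $E\subseteq A$ and $F\subseteq B$ nonempty, convex and weakly compact and with $T(E)\subseteq F$, $T(F)\subseteq E$, partially ordered by inclusion of pairs. Then $\mathcal{F}\neq\emptyset$ since $(A,B)\in\mathcal{F}$, and every chain has a lower bound, namely the pair of intersections: this pair is nonempty because a chain of nonempty weakly compact (hence weakly closed) sets has the finite intersection property, it is again convex and weakly compact, and it is clearly $T$-invariant. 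Zorn's lemma then produces a minimal element $(K_{1},K_{2})\in\mathcal{F}$.

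Next I would establish the identities $\overline{\mathrm{co}}\,T(K_{2})=K_{1}$ and $\overline{\mathrm{co}}\,T(K_{1})=K_{2}$. Putting $L_{1}=\overline{\mathrm{co}}\,T(K_{2})$ and $L_{2}=\overline{\mathrm{co}}\,T(K_{1})$, we have $L_{1}\subseteq K_{1}$ and $L_{2}\subseteq K_{2}$ because $K_{1},K_{2}$ are closed and convex; both $L_{i}$ are nonempty, convex and weakly compact (weakly closed subsets of weakly compact sets), and $T(L_{1})\subseteq T(K_{1})\subseteq L_{2}$, $T(L_{2})\subseteq T(K_{2})\subseteq L_{1}$, so $(L_{1},L_{2})\in\mathcal{F}$; minimality of $(K_{1},K_{2})$ forces $L_{1}=K_{1}$ and $L_{2}=K_{2}$.

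Then I would use cyclic Kannan nonexpansiveness to show the pair $(K_{1},K_{2})$ is degenerate. Let $\mu=\sup\{\|x-Tx\|:x\in K_{1}\cup K_{2}\}$, which is finite since $K_{1}\cup K_{2}$ is bounded. For $x\in K_{1}$ and $y\in K_{2}$ the defining inequality gives $\|Tx-Ty\|\le\frac12\bigl(\|x-Tx\|+\|y-Ty\|\bigr)\le\mu$, and here $Tx\in T(K_{1})$, $Ty\in T(K_{2})$; since $(u,v)\mapsto\|u-v\|$ is convex and weakly lower semicontinuous, its supremum over $T(K_{1})\times T(K_{2})$ equals its supremum over $\overline{\mathrm{co}}\,T(K_{1})\times\overline{\mathrm{co}}\,T(K_{2})=K_{2}\times K_{1}$, so in fact $\|u-v\|\le\mu$ for all $u\in K_{1}$ and $v\in K_{2}$. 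As $\|u-v\|\ge d$ always, once $\mu=d$ is known it follows that $\|x-Tx\|=d$ for every $x\in K_{1}\cup K_{2}$, and any such $x$ is a best proximity point; the theorem thus reduces to the claim $\mu=d$.

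Finally, suppose toward a contradiction that $\mu>d$. The points of $K_{1}\cup K_{2}$ that nearly attain $\mu$ then satisfy $\|x-Tx\|>d$, so the hypothesis yields $\|T^{2}x-Tx\|<\|x-Tx\|$ at those points, whereas $\|T^{2}x-Tx\|\le\|x-Tx\|$ holds at every point (take $y=Tx$ in the Kannan inequality); that is, $T$ strictly shrinks the top of the displacement functional $x\mapsto\|x-Tx\|$ and never enlarges it. The aim is to convert this into a proper sub-member of $\mathcal{F}$ — morally, the pair formed by the closed convex hulls of the $T$-images of the sublevel sets $\{x:\|x-Tx\|\le c\}$ for an appropriate $c<\mu$ — contradicting the minimality of $(K_{1},K_{2})$ and forcing $\mu=d$. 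I expect this to be the main obstacle: the displacement functional is neither convex nor, since a Kannan map need not be continuous, weakly semicontinuous, and the Kannan inequality controls $\|Tx-Ty\|$ only when $x\in A$ and $y\in B$, so keeping the candidate sub-pair simultaneously nonempty, convex, weakly compact and $T$-invariant is delicate — and it is exactly here that the strict displacement hypothesis stands in for the geometric hypotheses needed in the purely nonexpansive setting. Once $\mu=d$ is established, the proof is complete.
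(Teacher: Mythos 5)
Your setup---Zorn's lemma on the family of nonempty, weakly compact, convex, $T$-cyclic pairs, a minimal pair $(K_1,K_2)$, and the identities $\overline{\mathrm{co}}\,T(K_2)=K_1$ and $\overline{\mathrm{co}}\,T(K_1)=K_2$---is correct and is how every proof of this type begins (the paper quotes this theorem from [17] without proof, but its own Theorem \ref{theorem2.12} runs the coupled analogue of exactly this scheme). However, the step you defer as ``the main obstacle'' is the entire content of the theorem, and the route you choose for it does not close. Working with $\mu=\sup\{\Vert x-Tx\Vert\}$ is already problematic: as you yourself observe, $T$ need not be continuous, so the displacement functional need not attain its supremum, and a strict inequality $\Vert T^2x-Tx\Vert<\Vert x-Tx\Vert$ at \emph{near}-maximizers produces no contradiction. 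More importantly, you never actually construct the proper invariant sub-pair, so minimality is never contradicted and $\mu=d$ is never established.

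The missing computation is the following (it is precisely what the paper carries out in the proof of Theorem \ref{theorem2.12}). Fix an arbitrary $x\in K_1$, set $r=\Vert x-Tx\Vert$, and suppose $r>d$. Let $W_1=\{u\in K_1:\Vert u-Tu\Vert\le r\}$ and $W_2=\{v\in K_2:\Vert v-Tv\Vert\le r\}$. Taking $y=Tx$ in the Kannan inequality gives $\Vert Tx-T^2x\Vert\le\frac12\bigl(\Vert x-Tx\Vert+\Vert Tx-T^2x\Vert\bigr)$, hence $\Vert Tx-T^2x\Vert\le r$, so $Tx\in W_2$ and both sets are nonempty. The key point is that $V_1:=\overline{\mathrm{co}}\,T(W_2)\subseteq W_1$ and $V_2:=\overline{\mathrm{co}}\,T(W_1)\subseteq W_2$: if $u$ lies within $\epsilon$ of a convex combination $\sum_i t_iTc_i$ with $c_i\in W_2$, then the Kannan inequality bounds each $\Vert Tc_i-Tu\Vert$ by $\frac12\bigl(\Vert c_i-Tc_i\Vert+\Vert u-Tu\Vert\bigr)\le\frac12\bigl(r+\Vert u-Tu\Vert\bigr)$, whence $\Vert u-Tu\Vert\le\epsilon+\frac r2+\frac12\Vert u-Tu\Vert$ and so $\Vert u-Tu\Vert\le r+2\epsilon$; letting $\epsilon\to0$ gives $u\in W_1$. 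This is the convexity surrogate you were searching for: the sublevel set itself is neither convex nor weakly closed, but the closed convex hull of its $T$-image lands back inside it because the Kannan inequality controls $\Vert Tc_i-Tu\Vert$ by \emph{displacements} rather than by $\Vert c_i-u\Vert$. Since $(V_1,V_2)$ is a nonempty, convex, weakly compact pair on which $T$ is cyclic, minimality forces $V_1=K_1$ and $V_2=K_2$, hence $W_1=K_1$ and $W_2=K_2$. As $x$ was arbitrary, either some point has displacement $d$ (and we are done) or all displacements on $K_1\cup K_2$ are equal to a common value $r>d$; in the latter case $\Vert Tx-T^2x\Vert=r$ while the hypothesis gives $\Vert Tx-T^2x\Vert<\Vert x-Tx\Vert=r$, a contradiction. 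Without this block your argument establishes only the existence of the minimal pair, not the theorem.
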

Gupta and Rohilla \cite{8} introduced the notion of $p$-cyclic contraction mappings and established the existence of coupled best proximity point of such mappings.
\begin{definition}
\emph{\cite[Definition 2.1]{8} Let $A$ and $B$ be nonempty subsets of a Banach space $X$. A mapping $T :(A \times B) \cup (B \times A) \rightarrow A \cup B$ is called a $p$-cyclic contraction mapping if it satisfies the following conditions:}

 \emph{(i) $T(A,B) \subseteq B$ and $T(B,A) \subseteq A$},
 
\emph{(ii) $ \Vert T(x_{1},y_{1})-T(x_{2},y_{2}) \Vert \leq
\lambda  \Vert (x_{1},y_{1}) - (x_{2},y_{2}) \Vert + (1- \lambda )dist(A,B)$ for some $\lambda \in (0,1)$.}
\end{definition}
\begin{theorem}
\cite[Theorem 2.6]{8} Let $A$ and $B$ be nonempty closed and convex subsets of a uniformly convex Banach space $X$. Let $T :(A \times B) \cup (B \times A) \rightarrow A \cup B$ be a $p$-cyclic contraction mapping. Then $T$ has a unique coupled best proximity point.
\end{theorem}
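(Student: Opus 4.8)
The plan is to adapt the Eldred--Veeramani iteration scheme to the coupled (two-variable) setting. Fix any $(x_{0},y_{0})\in A\times B$ and define $x_{n+1}=T(y_{n},x_{n})$, $y_{n+1}=T(x_{n},y_{n})$; by condition (i) we have $x_{n}\in A$ and $y_{n}\in B$ for all $n$, so every argument pair fed to $T$ lies in its domain $(A\times B)\cup(B\times A)$. Put $d:=dist(A,B)$ and recall $\Vert(u,v)\Vert=\max\{\Vert u\Vert,\Vert v\Vert\}$. Applying (ii) to $(y_{n},x_{n})$ and $(x_{n},y_{n})$ gives $\Vert x_{n+1}-y_{n+1}\Vert\le\lambda\Vert x_{n}-y_{n}\Vert+(1-\lambda)d$, so $\alpha_{n}:=\Vert x_{n}-y_{n}\Vert\ (\ge d)$ satisfies $\alpha_{n}-d\le\lambda^{n}(\alpha_{0}-d)$, whence $\alpha_{n}\downarrow d$. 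Expanding $x_{n}=T(y_{n-1},x_{n-1})$, $y_{n+1}=T(x_{n},y_{n})$ and, symmetrically, $x_{n+1}=T(y_{n},x_{n})$, $y_{n}=T(x_{n-1},y_{n-1})$, and applying (ii), the quantity $\mu_{n}:=\max\{\Vert x_{n}-y_{n+1}\Vert,\Vert x_{n+1}-y_{n}\Vert\}$ obeys $\mu_{n}\le\lambda\mu_{n-1}+(1-\lambda)d$; since $\mu_{n}\ge d$ this forces $\mu_{n}\to d$. Hence $\Vert x_{n}-y_{n}\Vert$, $\Vert x_{n}-y_{n+1}\Vert$ and $\Vert x_{n+1}-y_{n}\Vert$ all tend to $d$. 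Finally, applying (ii) to the domain pairs $(y_{n},x_{n}),(y_{0},x_{0})$ and $(x_{n},y_{n}),(x_{0},y_{0})$ yields $\max\{\Vert x_{n+1}\Vert,\Vert y_{n+1}\Vert\}\le\lambda\max\{\Vert x_{n}\Vert,\Vert y_{n}\Vert\}+C$ for a constant $C$, so $\{x_{n}\}$ and $\{y_{n}\}$ are bounded.

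Next I would show $\{x_{n}\}$ and $\{y_{n}\}$ are Cauchy. For $m\ge n\ge 1$ put $E_{m,n}=\max\{\Vert x_{m}-y_{n}\Vert,\Vert x_{n}-y_{m}\Vert\}$; expanding both entries through $T$ and using (ii) gives $E_{m,n}\le\lambda E_{m-1,n-1}+(1-\lambda)d$, hence $E_{m,n}-d\le\lambda^{n}(E_{m-n,0}-d)$, and since the iterates are bounded $E_{k,0}$ is bounded in $k$. Thus for every $\varepsilon>0$ there is $N$ with $\Vert x_{m}-y_{n}\Vert\le d+\varepsilon$ for all $m\ge n\ge N$; together with $\alpha_{n}\to d$, the standard uniform-convexity lemma (the argument used for Theorem~1.3; cf.~\cite{15}) forces $\{x_{n}\}$ and $\{y_{n}\}$ to be Cauchy. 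Let $x_{n}\to x\in A$, $y_{n}\to y\in B$ (closedness), so in the limit $\Vert x-y\Vert=d$. To see $(x,y)$ is a coupled best proximity point, note first that $\Vert x-y_{n-1}\Vert\le\Vert x-x_{n-1}\Vert+\alpha_{n-1}\to d$ and $\Vert y-x_{n-1}\Vert\le\Vert y-y_{n-1}\Vert+\alpha_{n-1}\to d$, both being $\ge d$; then (ii) for $(x,y)$ and $(y_{n-1},x_{n-1})$ gives $\Vert T(x,y)-x_{n}\Vert\le\lambda\max\{\Vert x-y_{n-1}\Vert,\Vert y-x_{n-1}\Vert\}+(1-\lambda)d\to d$, while $\Vert T(x,y)-x_{n}\Vert\ge d$, so $\Vert T(x,y)-x_{n}\Vert\to d$; since $x_{n}\to x$, this yields $\Vert x-T(x,y)\Vert=d$, and the symmetric computation gives $\Vert y-T(y,x)\Vert=d$.

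For uniqueness I would first record the elementary fact that, $X$ being uniformly convex and $A,B$ convex, for each $a\in A$ there is at most one $b\in B$ with $\Vert a-b\Vert=d$ (if $b_{1}\ne b_{2}$ both work, apply uniform convexity to $\tfrac{b_{1}+b_{2}}{2}\in B$), and symmetrically with $A$ and $B$ exchanged. Next I claim that \emph{every} coupled best proximity point $(u,v)$ satisfies $T(u,v)=v$ and $T(v,u)=u$: running the iteration from $(x_{0},y_{0})=(u,v)$, the equalities $\Vert u-T(u,v)\Vert=\Vert T(v,u)-v\Vert=d$ say exactly that $\mu_{0}=d$, so $\mu_{n}\le\lambda\mu_{n-1}+(1-\lambda)d$ and $\mu_{n}\ge d$ give $\mu_{n}=d$ for all $n$, i.e.\ $\Vert x_{n}-y_{n+1}\Vert=\Vert x_{n+1}-y_{n}\Vert=d$ for every $n$; applying the uniqueness fact to $x_{1}$ (at distance $d$ from both $y_{0}$ and $y_{2}$) and to $y_{1}$ (at distance $d$ from both $x_{0}$ and $x_{2}$) yields $(x_{2},y_{2})=(x_{0},y_{0})$, so the iteration is $2$-periodic; being convergent it is constant, hence $(x_{1},y_{1})=(x_{0},y_{0})$, which is the claim (and in particular $\Vert u-v\Vert=d$). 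Finally, if $(x_{1},y_{1})$ and $(x_{2},y_{2})$ are two coupled best proximity points, then $T(x_{i},y_{i})=y_{i}$ and $T(y_{i},x_{i})=x_{i}$; with $s=\Vert x_{1}-y_{2}\Vert$ and $t=\Vert y_{1}-x_{2}\Vert$, applying (ii) to $(y_{1},x_{1}),(x_{2},y_{2})$ and to $(x_{1},y_{1}),(y_{2},x_{2})$ gives $s,t\le\lambda\max\{s,t\}+(1-\lambda)d$, hence $\max\{s,t\}\le d$; but $s,t\ge d$, so $s=t=d$. Then $y_{1}$ and $y_{2}$ are both at distance $d$ from $x_{1}$, so $y_{1}=y_{2}$, and likewise $x_{1}=x_{2}$.

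The routine but technically heaviest part is the Cauchy estimate of the second paragraph, which is exactly the uniform-convexity machinery of Eldred and Veeramani. The genuinely delicate point is the uniqueness: one cannot assume a priori that an arbitrary coupled best proximity point $(u,v)$ satisfies $\Vert u-v\Vert=dist(A,B)$, and this has to be extracted from the convergence of the iteration together with the forced $2$-periodicity before the short contraction estimate can close the argument.
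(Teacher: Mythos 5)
Your proposal is essentially correct, and the overall skeleton is the same Eldred--Veeramani machinery the paper uses: iterate, show consecutive distances tend to $dist(A,B)$, establish a two-index estimate $\Vert x_m-y_n\Vert<dist(A,B)+\epsilon$, invoke uniform convexity to get a Cauchy sequence, pass to the limit, then prove uniqueness. The differences are worth recording. First, the paper does not prove this statement directly: it proves the more general $p$-cyclic $\phi$-contraction version (Theorem \ref{theorem2.10}) through the chain Lemma \ref{lemma 2.2}--Theorem \ref{theorem2.8} and then specializes via $\phi(t)=(1-\lambda)t$; because $\phi$ is only assumed strictly increasing there, the key estimate of Theorem \ref{theorem2.6} must be obtained by a contradiction argument, whereas you exploit the linear form of the contraction to get the explicit geometric decay $E_{m,n}-d\le\lambda^{n}(E_{m-n,0}-d)$, which is genuinely simpler (at the cost of needing boundedness of the orbit as an extra input). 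Second, your uniqueness argument is genuinely different and, to my mind, sharper: the paper (Theorem \ref{theorem2.10}) first shows $T(T(x,y),T(y,x))=x$ via the convergence Lemma \ref{lemma2.9} and then runs a delicate contraction comparison between the two candidate points; you instead use the strict-convexity fact that each $a\in A$ has at most one point of $B$ at distance $dist(A,B)$, deduce forced $2$-periodicity of the orbit through any coupled best proximity point, and combine with convergence to get the stronger conclusion $T(u,v)=v$, $T(v,u)=u$ in a single step, after which the contraction estimate closes immediately. Your observation that $\Vert u-v\Vert=dist(A,B)$ is not free and must be extracted is a correct and genuinely delicate point that the paper's route also has to negotiate (via Lemma \ref{lemma2.9}).

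One small repair is needed. In your boundedness step you apply the contraction inequality to the pairs $(y_n,x_n)$ and $(y_0,x_0)$, both of which lie in $B\times A$; the cyclic contraction condition is only intended for one argument in $A\times B$ and the other in $B\times A$, so this application is not licensed. The fix is immediate: compare $x_{n+1}=T(y_n,x_n)$ with $y_1=T(x_0,y_0)$ (a legitimate cyclic pair) to get $\Vert x_{n+1}-y_1\Vert\le\lambda\max\{\Vert y_n-x_0\Vert,\Vert x_n-y_0\Vert\}+(1-\lambda)dist(A,B)$, which yields $E_{n+1,1}\le\lambda E_{n,0}+(1-\lambda)dist(A,B)$ and hence, after a triangle inequality, $E_{n,0}\le\lambda E_{n-1,0}+C$; this gives the boundedness you need. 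With that adjustment the argument goes through.
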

Gupta and Rohilla \cite{8} introduced the notion of $p$-cyclic nonexpansive mappings and established the existence of coupled best proximity point of such mappings.
\begin{definition}
\emph{\cite[Definition 2.7]{8} Let $A$ and $B$ be nonempty subsets of a Banach space $X$. A mapping $T :(A \times B) \cup (B \times A) \rightarrow A \cup B$ is called a $p$-cyclic nonexpansive mapping if it satisfies the following conditions:}

 \emph{(i) $T(A,B) \subseteq B$ and $T(B,A) \subseteq A$},
 
\emph{(ii) $\Vert T(x_{1},y_{1})-T(x_{2},y_{2}) \Vert \leq \Vert (x_{1},y_{1}) - (x_{2},y_{2}) \Vert$.}
\end{definition}
\begin{theorem}
\cite[Theorem 2.11]{8} Let $A$ and $B$ be nonempty closed, bounded  and convex subsets of a uniformly convex Banach space $X$ such that $A_0 \times B_0$ is compact. Let $T :(A \times B) \cup (B \times A) \rightarrow A \cup B$ be a $p$-cyclic nonexpansive mapping. Then $T$ has a coupled best proximity point.
\end{theorem}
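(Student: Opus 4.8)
The plan is to turn the existence of a coupled best proximity point into an ordinary fixed point problem on $A_0\times B_0$ and to close it with Schauder's fixed point theorem. I first collect the routine preliminaries. Condition (ii) of Definition 2.7 says exactly that $T$ is nonexpansive, hence continuous, on $(A\times B)\cup(B\times A)$. Since a uniformly convex space is reflexive, Lemma 1.1 gives that $A_0$ and $B_0$ are nonempty; they are also convex, because if $a_1,a_2\in A_0$ have proximal partners $b_1,b_2\in B_0$, then $\frac{a_1+a_2}{2}\in A$, $\frac{b_1+b_2}{2}\in B$ and $\Vert\frac{a_1+a_2}{2}-\frac{b_1+b_2}{2}\Vert\le dist(A,B)$, which forces equality and hence $\frac{a_1+a_2}{2}\in A_0$ (similarly for $B_0$). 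Finally, since $A_0\times B_0$ is compact, so are $A_0$ and $B_0$.

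The heart of the argument is the claim that $T$ sends $A_0\times B_0$ into $B_0$ in the first slot and into $A_0$ in the ``swapped'' slot: if $(a,b)\in A_0\times B_0$, then $T(a,b)\in B_0$ and $T(b,a)\in A_0$. To prove it, choose $\beta\in B$ with $\Vert a-\beta\Vert=dist(A,B)$ and $\alpha\in A$ with $\Vert b-\alpha\Vert=dist(A,B)$. Then $(a,b)\in A\times B$ and $(\beta,\alpha)\in B\times A$, so by (i) $T(a,b)\in B$ and $T(\beta,\alpha)\in A$, and therefore $dist(A,B)\le\Vert T(a,b)-T(\beta,\alpha)\Vert\le\Vert(a,b)-(\beta,\alpha)\Vert=\max\{\Vert a-\beta\Vert,\Vert b-\alpha\Vert\}=dist(A,B)$. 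Hence $\Vert T(a,b)-T(\beta,\alpha)\Vert=dist(A,B)$ with $T(\beta,\alpha)\in A$, i.e. $T(a,b)\in B_0$. Running the same computation with $(b,a)\in B\times A$ and $(\alpha,\beta)\in A\times B$ gives $T(b,a)\in A_0$.

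Now introduce the proximal projections. For $b\in B_0$ let $P(b)$ be the point of $A_0$ with $\Vert P(b)-b\Vert=dist(A,B)$; uniform convexity of $X$ together with convexity of $A$ makes this point unique, so $P\colon B_0\to A_0$ is well defined, and it is continuous because $A_0$ is compact and the equality $\Vert P(b)-b\Vert=dist(A,B)$ persists under limits (so every subsequential limit of $P(b_n)$ along $b_n\to b$ equals $P(b)$); define $Q\colon A_0\to B_0$ symmetrically. By the previous paragraph the map $G\colon A_0\times B_0\to A_0\times B_0$, $G(a,b)=\big(P(T(a,b)),\,Q(T(b,a))\big)$, is well defined, and it is continuous since $T$, $P$, $Q$ are. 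As $A_0\times B_0$ is a nonempty compact convex subset of the Banach space $(X\times X,\Vert\cdot\Vert)$, Schauder's fixed point theorem produces $(a,b)\in A_0\times B_0$ with $G(a,b)=(a,b)$, i.e. $a=P(T(a,b))$ and $b=Q(T(b,a))$; by the definitions of $P$ and $Q$ this means $\Vert a-T(a,b)\Vert=dist(A,B)$ and $\Vert b-T(b,a)\Vert=dist(A,B)$, so $(a,b)$ is a coupled best proximity point of $T$.

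I expect the main obstacle to be the middle step. One has to feed the cyclic inclusions $T(A,B)\subseteq B$, $T(B,A)\subseteq A$ the arguments with the correct ``types'' so that $T(a,b)$ and $T(\beta,\alpha)$ land on opposite sides and the squeeze $dist(A,B)\le\Vert T(a,b)-T(\beta,\alpha)\Vert\le dist(A,B)$ really collapses; this is precisely where plain nonexpansiveness suffices and where the compactness hypothesis is not yet used. The remaining delicate point is verifying that $G$ is a continuous self-map of $A_0\times B_0$: this rests on the middle step (so that $P$ and $Q$ may legitimately be applied to $T(a,b)$ and $T(b,a)$) and on compactness of $A_0$, $B_0$ to get continuity of $P$ and $Q$ (alternatively one may quote continuity of the metric projection onto a closed convex set in a uniformly convex space). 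The rest — convexity of $A_0$ and $B_0$, continuity of $T$, and the application of Schauder — is standard.
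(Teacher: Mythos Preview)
This theorem is quoted in the preliminaries from \cite{8} and the present paper does not supply its own proof, so there is nothing in the paper to compare your argument against.

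On its own merits your proof is correct. The decisive step---showing that $(a,b)\in A_0\times B_0$ forces $T(a,b)\in B_0$ and $T(b,a)\in A_0$---is handled cleanly: picking proximal partners $\beta\in B$ for $a$ and $\alpha\in A$ for $b$ puts $(\beta,\alpha)$ on the opposite side, and the nonexpansive estimate collapses $\Vert T(a,b)-T(\beta,\alpha)\Vert$ to $dist(A,B)$. The subsequent reduction to Schauder via the proximal projections $P\colon B_0\to A_0$ and $Q\colon A_0\to B_0$ is a standard and efficient device; uniqueness of the proximal point (strict convexity of $X$ plus convexity of $A$, $B$) makes $P$ and $Q$ well defined, and your compactness argument for their continuity is valid (the alternative you mention---continuity of the metric projection in uniformly convex spaces---would also work and avoids using compactness at that point). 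The convexity of $A_0$ and $B_0$ and the application of Schauder on the compact convex set $A_0\times B_0$ are routine, and the fixed point of $G$ unwinds exactly to the coupled best proximity condition.
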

In this paper, we introduce the concept of $p$-cyclic $\phi$ contraction and $p$-cyclic Kannan nonexpansive mappings. Motivated by Al-Thagafi and Shahzad \cite{15}, we extend the notion of $p$-cyclic contraction mapping  introduced by Gupta and Rohilla \cite{8} and introduced the notion of $p$-cyclic $\phi$ contraction mappings. We extend the notion of cyclic Kannan nonexpansive mappings introduced by Gabeleh \cite{17}. The main objective is to establish the existence of coupled best proximity points of these mappings.  The main objective of the paper is to formulate necessary conditions for the existence of coupled best proximity point of such mappings. An example is also provided to support the results obtained.
   
\section{Main Results}
   In this section we obtain coupled best proximity point results of $p$-cyclic $\phi$ contraction and $p$-cyclic Kannan nonexpansive mappings. To establish our results, we intoduce the following new class of mappings: 
     \begin{definition}
 \emph{Let $A$ and $B$ be nonempty subsets of a Banach space $X$. A mapping $ T:(A \times B) \cup (B \times A) \rightarrow A \cup B$ called a $p$-cyclic $\phi$-contraction mapping if the following conditions are satisfied:}
   
 (i)  $T(A,B) \subseteq B$ and $T(B,A) \subseteq A$,
 
(ii) $ \Vert T(x_{n},y_{n})-T(x_{n+1},y_{n+1})\Vert  \leq  \Vert(x_{n},y_{n}) - (x_{n+1},y_{n+1})\Vert-\phi( \Vert(x_{n},y_{n}) - (x_{n+1},y_{n+1})\Vert) +\phi(dist(A,B)$, \emph{where $\phi :[0, \infty) \rightarrow [0, \infty)$ is a strictly increasing map.} 
   \end{definition}
   If we take $\phi(t)=(1-\lambda)t$, where $\lambda \in (0,1)$ in the above definition, then it reduces to $p$-cyclic contraction mapping \cite[Definition 2.1]{8}. 
    
Now, we give the following useful results that will be used to prove the existence of coupled best proximity point of a $p$-cyclic $\phi$ contraction mapping.  
 \begin{lemma}\label{lemma 2.2}
 Let $A$ and $B$ be two  nonempty subsets of a normed space $X$. Let $ T:(A \times B) \cup (B \times A) \rightarrow A \cup B$ be a $p$-cyclic $\phi$-contraction mapping. For $(x_{0},y_{0})  \in A \times B $, define $ x_{n} = T(x_{n-1},y_{n-1})$ and $y_{n} = T( y_{n-1},x_{n-1})$ for each  $n \in \mathbb{N}$. Then 
 
   (i)  $ \phi(dist(A,B)) \leq  \phi(\Vert(x_{n},y_n) - (x_{n-1},y_{n-1}) \Vert$,
   
  (ii) $\Vert T(x_{n},y_{n})-T(x_{n-1},y_{n-1}) \Vert \leq 
 \Vert(x_{n},y_{n})-(x_{n-1},y_{n-1})\Vert$,
 
 (iii) $\Vert (x_{n+2},y_{n+2})-(x_{n+1},y_{n+1})\Vert \leq \Vert (x_{n+1},y_{n+1})-(x_{n},y_{n})\Vert$.
\end{lemma}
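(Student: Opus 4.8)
The plan is to first locate the iterates inside $A$ and $B$, and then read off (i)--(iii) almost mechanically from the defining inequality of a $p$-cyclic $\phi$-contraction together with the monotonicity of $\phi$.

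First I would show, by induction on $n$ starting from $(x_0,y_0)\in A\times B$, that $x_{2k}\in A$, $x_{2k+1}\in B$, $y_{2k}\in B$ and $y_{2k+1}\in A$ for every $k\ge 0$: if $x_n\in A$ and $y_n\in B$, then $x_{n+1}=T(x_n,y_n)\in T(A,B)\subseteq B$ and $y_{n+1}=T(y_n,x_n)\in T(B,A)\subseteq A$, and symmetrically in the other case. Consequently, for each $n\in\mathbb{N}$ the points $x_n$ and $x_{n-1}$ lie in different members of the pair $(A,B)$, and likewise $y_n$ and $y_{n-1}$, so $\|x_n-x_{n-1}\|\ge dist(A,B)$ and $\|y_n-y_{n-1}\|\ge dist(A,B)$; since the norm on $X\times X$ is the maximum of the coordinate norms, this gives $\|(x_n,y_n)-(x_{n-1},y_{n-1})\|\ge dist(A,B)$. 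Applying the strictly increasing map $\phi$ to this last inequality yields (i).

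For (ii), I would apply the $p$-cyclic $\phi$-contraction inequality to the pairs $(x_n,y_n)$ and $(x_{n-1},y_{n-1})$ --- both legitimately in $(A\times B)\cup(B\times A)$ by the first step --- to get
\[
  \|T(x_n,y_n)-T(x_{n-1},y_{n-1})\| \le \|(x_n,y_n)-(x_{n-1},y_{n-1})\| - \phi\big(\|(x_n,y_n)-(x_{n-1},y_{n-1})\|\big) + \phi(dist(A,B)),
\]
and then note that by (i) the last two terms sum to a nonpositive quantity. For (iii), since $(x_{n+1},y_{n+1})=(T(x_n,y_n),T(y_n,x_n))$ the max-norm gives
\[
  \|(x_{n+2},y_{n+2})-(x_{n+1},y_{n+1})\| = \max\big\{\|T(x_{n+1},y_{n+1})-T(x_n,y_n)\|,\ \|T(y_{n+1},x_{n+1})-T(y_n,x_n)\|\big\};
\]
the first entry is controlled by (ii) with $n$ replaced by $n+1$, and for the second I would apply the $\phi$-contraction inequality to the pairs $(y_n,x_n)$ and $(y_{n+1},x_{n+1})$ (again in the domain by the first step), using that $\|(y_{n+1},x_{n+1})-(y_n,x_n)\| = \|(x_{n+1},y_{n+1})-(x_n,y_n)\| \ge dist(A,B)$ by the symmetry of the max-norm and the first step, so that the penalty term $-\phi(\,\cdot\,)+\phi(dist(A,B))$ is again nonpositive. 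Taking the maximum then yields (iii).

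I do not expect a genuine obstacle here; the only point that needs care is the bookkeeping --- making sure that every pair fed into the $\phi$-contraction inequality actually lies in $(A\times B)\cup(B\times A)$, which is precisely what the alternating membership pattern from the first step guarantees, together with the use of the symmetry of the max-norm to transfer estimates between the $x$- and $y$-iterations.
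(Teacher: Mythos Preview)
Your proposal is correct and follows essentially the same route as the paper: bound $dist(A,B)$ from above by $\|(x_n,y_n)-(x_{n-1},y_{n-1})\|$, apply the strictly increasing $\phi$ for (i), feed (i) into the defining $\phi$-contraction inequality for (ii), and then use (ii) componentwise together with the max-norm symmetry $\|(y_{n+1},x_{n+1})-(y_n,x_n)\|=\|(x_{n+1},y_{n+1})-(x_n,y_n)\|$ for (iii). The only difference is that you make explicit the alternating membership $x_{2k}\in A$, $x_{2k+1}\in B$, etc.\ (and hence the domain checks for the $\phi$-contraction inequality), which the paper leaves implicit.
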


  \begin{proof}
  (i) We have $dist(A,B) \leq \Vert x_n-x_{n-1} \Vert$ and $dist(A,B) \leq \Vert y_n-y_{n-1} \Vert$. Therefore, $dist(A,B) \leq \Vert (x_n,y_n)-(x_{n-1},y_{n-1}) \Vert$. Since $\phi$ is strictly increasing mapping,  $ \phi (dist(A,B)) \leq \phi (\Vert (x_{n},y_{n})-(x_{n-1},y_{n-1}) \Vert$.

(ii)  Since T is a $\phi$ $p$-cyclic contraction mapping, 
            $ \Vert T(x_{n},y_{n})-T(x_{n-1}-y_{n-1})\Vert \leq \Vert(x_{n},y_{n})-(x_{n-1},y_{n-1})\Vert-\phi(\Vert(x_{n},y_{n})-(x_{n-1},y_{n-1})\Vert)+ \phi(dist(A,B))$. Using (i), it follows that  $ \Vert T(x_{n},y_{n})-T(x_{n-1},y_{n-1})\Vert \leq \Vert(x_{n},y_{n})-(x_{n-1},y_{n-1})\Vert$.       
 
(iii) Using (ii), we have $\Vert x_{n+2}-x_{n+1}\Vert = \Vert T(x_{n+1},y_{n+1})-T(x_n,y_n)\Vert \leq \Vert (x_{n+1},y_{n+1})-(x_n,y_n)\Vert$ and  $\Vert y_{n+2}-y_{n+1}\Vert = \Vert T(y_{n+1},x_{n+1})-T(y_n,x_n)\Vert \leq \Vert (y_{n+1},x_{n+1})-(y_n,x_n)\Vert$. This gives $\Vert (x_{n+2},y_{n+2})-(x_{n+1},y_{n+1})\Vert \leq \Vert (x_{n+1},y_{n+1})-(x_n,y_n)\Vert$. 
  \end{proof}      
\begin{theorem}\label{theorem2.3}
Let $A$ and $B$ be two  nonempty subsets of a normed space $X$. Let $ T: (A \times B) \cup (B \times A) \rightarrow A \cup B$ be a   $p$-cyclic $\phi$-contraction mapping. For $(x_{0},y_{0})  \in A \times B $, define $ x_{n} = T(x_{n-1},y_{n-1})$ and $y_{n} = T( y_{n-1},x_{n-1})$ for each  $n \in \mathbb{N}$. Then $ \Vert (x_{n},y_{n})-(x_{n+1}, y_{n+1})\Vert \rightarrow dist(A,B)$. 
 \end{theorem}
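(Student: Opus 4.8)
The plan is to track the scalar sequence $d_n := \Vert (x_n,y_n)-(x_{n+1},y_{n+1})\Vert$ and show it decreases to $dist(A,B)$. First I would note that the iteration is well defined and that the contraction inequality is available at every step: since $T(A,B)\subseteq B$ and $T(B,A)\subseteq A$, an easy induction shows that $(x_n,y_n)$ lies alternately in $A\times B$ and $B\times A$, so Definition 2.1(ii) applies to each consecutive pair $(x_n,y_n),(x_{n+1},y_{n+1})$; moreover, because $y_n=T(y_{n-1},x_{n-1})$ and $x_n=T(x_{n-1},y_{n-1})$, the pair-sequence $(y_n,x_n)$ is itself an iteration sequence of the same type (started in $B\times A$), so the same inequality holds with the coordinates interchanged.

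Next I would invoke Lemma~\ref{lemma 2.2}. Part (iii) gives $d_{n+1}\le d_n$, so $(d_n)$ is non-increasing; part (i) gives $dist(A,B)\le \Vert (x_n,y_n)-(x_{n-1},y_{n-1})\Vert=d_{n-1}$, so $d_n\ge dist(A,B)$ for every $n$. A non-increasing sequence bounded below converges, hence $d_n\downarrow d$ for some $d\ge dist(A,B)$, and it remains only to rule out $d>dist(A,B)$.

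For the final step I would apply the defining inequality coordinatewise. Using the max-norm on $X\times X$ and its invariance under swapping coordinates (so that $\Vert (y_n,x_n)-(y_{n+1},x_{n+1})\Vert=d_n$), one obtains
\[
\Vert x_{n+1}-x_{n+2}\Vert\le d_n-\phi(d_n)+\phi(dist(A,B)),\qquad \Vert y_{n+1}-y_{n+2}\Vert\le d_n-\phi(d_n)+\phi(dist(A,B)),
\]
and hence $d_{n+1}\le d_n-\phi(d_n)+\phi(dist(A,B))$. The delicate point is that $\phi$ is only assumed strictly increasing, not continuous, so one cannot pass to the limit inside $\phi$. I would get around this using monotonicity of $\phi$ together with $d_n\ge d$: since $\phi(d_n)\ge\phi(d)$, we have $d_{n+1}\le d_n-\phi(d)+\phi(dist(A,B))$, and letting $n\to\infty$ yields $d\le d-\phi(d)+\phi(dist(A,B))$, i.e. $\phi(d)\le\phi(dist(A,B))$. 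Because $\phi$ is strictly increasing and $d\ge dist(A,B)$, this forces $d=dist(A,B)$, which is the claim. The main obstacle is exactly this handling of the possible discontinuity of $\phi$ in the limiting argument; everything else is routine bookkeeping with the max-norm and Lemma~\ref{lemma 2.2}.
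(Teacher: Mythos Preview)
Your proof is correct and follows essentially the same approach as the paper: set $d_n=\Vert(x_n,y_n)-(x_{n+1},y_{n+1})\Vert$, use Lemma~\ref{lemma 2.2} to get $d_n\downarrow d\ge dist(A,B)$, derive $d_{n+1}\le d_n-\phi(d_n)+\phi(dist(A,B))$, and then exploit strict monotonicity of $\phi$ to force $d=dist(A,B)$. The only cosmetic difference is in the limit step: the paper sandwiches $\phi(d_n)$ to obtain $\phi(d_n)\to\phi(dist(A,B))$ and then uses $\phi(d)\le\phi(d_n)$, whereas you substitute $\phi(d_n)\ge\phi(d)$ directly before passing to the limit---both maneuvers correctly avoid assuming continuity of $\phi$.
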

 \begin{proof} 
 Let $ t_{n} = \Vert(x_{n},y_{n})-(x_{n+1},y_{n+1})\Vert$. By Lemma \ref{lemma 2.2}(iii), $\{t_{n}\}$ is a decreasing and bounded sequence. Therefore, $\lim\limits_{n \rightarrow \infty}  t_{n} = t_{0}$ for some $t_{0}\geq dist(A,B)$. If $t_{n_0}=0$ for some $n_{0}\geq 1$, then there is nothing to prove. Therefore, assume that $t_{n} > 0$ for each $n \in \mathbb{N}$. Since T is a $p$-cyclic $\phi$-contraction mapping, 
$$ \Vert(x_{n+1},y_{n+1})-(x_{n+2},y_{n+2})\Vert \leq \Vert (x_{n},y_{n})-(x_{n+1},y_{n+1}) \Vert- \phi(\Vert (x_{n},y_{n})-(x_{n+1},y_{n+1}) \Vert) + \phi(dist(A,B))$$ which gives 
$$\phi(\Vert (x_{n},y_{n})-(x_{n+1},y_{n+1}) \Vert) \leq \Vert (x_{n},y_{n})-(x_{n+1},y_{n+1}) \Vert -\Vert(x_{n+1},y_{n+1})-(x_{n+2},y_{n+2})\Vert + \phi(dist(A,B)).$$ Using Lemma \ref{lemma 2.2}(i), $\phi(dist(A,B))\leq \phi(\Vert (x_{n},y_{n})-(x_{n+1},y_{n+1}) \Vert) \leq \Vert (x_{n},y_{n})-(x_{n+1},y_{n+1}) \Vert -\Vert(x_{n+1},y_{n+1})-(x_{n+2},y_{n+2})\Vert + \phi(dist(A,B))$. This implies  $\lim\limits_{n \rightarrow \infty} \phi(\Vert (x_{n},y_{n})-(x_{n+1},y_{n+1}) \Vert)= \phi(dist(A,B))$. 
   As $dist(A,B) \leq t_{0} \leq \Vert (x_n,y_n)-(x_{n+1},y_{n+1})\Vert$ for each $n\geq1$. As $\phi$ is strictly increasing mapping, $\phi(dist(A,B)) \leq \phi(t_{0}) \leq \phi(\Vert (x_n,y_n)-(x_{n+1},y_{n+1})\Vert)$. Letting $n \rightarrow \infty$, $\phi(dist(A,B)) \leq \phi(t_{0}) \leq \phi(dist(A,B))$. Since $\phi$  is strictly increasing mapping, $t_{0} = dist(A,B)$, which gives $ \Vert (x_{n},y_{n})-(x_{n+1},y_{n+1})\Vert \rightarrow dist(A,B)$. 
\end{proof}
\begin{theorem}\label{theorem2.4}
 Let $A$ and $B$ be two  nonempty subsets of a normed space $X$. Let $ T: (A \times B) \cup (B \times A) \rightarrow A \cup B$ be a   $p$-cyclic $\phi$-contraction mapping. For $(x_{0},y_{0})  \in A \times B $, define $ x_{n} = T(x_{n-1},y_{n-1})$ and $y_{n} = T( y_{n-1},x_{n-1})$ for each  $n \in \mathbb{N}$. If ${(x_{2n},y_{2n})}$ has a convergent subsequence in  $A\times B$, then there exists $(x,y)\in A\times B$ such that 
$$\Vert x-T(x,y) \Vert= dist(A,B)  \thinspace \emph{and} \thinspace \Vert y-T(y,x) \Vert = dist(A,B).$$
\end{theorem}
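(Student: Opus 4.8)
The plan is to feed the assumed convergent subsequence into the $p$-cyclic $\phi$-contraction inequality, applied not to the subsequence itself but to the pair consisting of an \emph{odd} iterate and the subsequential limit. Write the convergent subsequence as $(x_{2n_k},y_{2n_k})\to(x,y)\in A\times B$, so $x_{2n_k}\to x$ and $y_{2n_k}\to y$. First I would record the cyclic placement of the iterates: since $(x_0,y_0)\in A\times B$, the inclusions $T(A,B)\subseteq B$, $T(B,A)\subseteq A$ give by induction that $(x_{2m},y_{2m})\in A\times B$ and $(x_{2m+1},y_{2m+1})\in B\times A$ for all $m$; in particular $x\in A$, $y\in B$, so $T(x,y)\in B$ and $T(y,x)\in A$ are well defined and both quantities in the conclusion make sense. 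Next, from Theorem \ref{theorem2.3} I would derive $\Vert x_n-x_{n+1}\Vert\to dist(A,B)$ and $\Vert y_n-y_{n+1}\Vert\to dist(A,B)$, since each of these lies between $dist(A,B)$ (consecutive $x$'s, resp.\ $y$'s, lie in different members of the pair) and $\Vert(x_n,y_n)-(x_{n+1},y_{n+1})\Vert$, the norm on $X\times X$ being the max-norm, so the squeeze applies.

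For the main estimate, using $\Vert x_{2n_k-1}-x\Vert\le\Vert x_{2n_k-1}-x_{2n_k}\Vert+\Vert x_{2n_k}-x\Vert\to dist(A,B)$ together with $\Vert x_{2n_k-1}-x\Vert\ge dist(A,B)$ (as $x_{2n_k-1}\in B$, $x\in A$), I obtain $\Vert x_{2n_k-1}-x\Vert\to dist(A,B)$, and likewise $\Vert y_{2n_k-1}-y\Vert\to dist(A,B)$; hence $r_k:=\Vert(x_{2n_k-1},y_{2n_k-1})-(x,y)\Vert\to dist(A,B)$, while $r_k\ge dist(A,B)$ for every $k$. Since $(x_{2n_k-1},y_{2n_k-1})\in B\times A$ and $(x,y)\in A\times B$, the defining inequality of a $p$-cyclic $\phi$-contraction applies to this pair, giving $\Vert x_{2n_k}-T(x,y)\Vert=\Vert T(x_{2n_k-1},y_{2n_k-1})-T(x,y)\Vert\le r_k-\phi(r_k)+\phi(dist(A,B))\le r_k$, where the last step uses $r_k\ge dist(A,B)$ and $\phi$ strictly increasing. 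Letting $k\to\infty$ and combining with $\Vert x_{2n_k}-T(x,y)\Vert\ge dist(A,B)$ ($x_{2n_k}\in A$, $T(x,y)\in B$) yields $\Vert x_{2n_k}-T(x,y)\Vert\to dist(A,B)$, and then $dist(A,B)\le\Vert x-T(x,y)\Vert\le\Vert x-x_{2n_k}\Vert+\Vert x_{2n_k}-T(x,y)\Vert\to dist(A,B)$, i.e.\ $\Vert x-T(x,y)\Vert=dist(A,B)$. Running the same argument with the roles of $x$ and $y$ interchanged — now using $y_{2n_k}=T(y_{2n_k-1},x_{2n_k-1})$ with $(y_{2n_k-1},x_{2n_k-1})\in A\times B$ and $(y,x)\in B\times A$ — gives $\Vert y-T(y,x)\Vert=dist(A,B)$, completing the proof.

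I expect the only real difficulty to be organizational: carefully tracking which of $A\times B$ or $B\times A$ each pair belongs to so that the cyclic inequality is invoked precisely in the configuration where it is available, and observing that nothing beyond "$\phi$ strictly increasing" and "the subsequential limit lies in $A\times B$" is used — in particular no continuity of $\phi$, no reflexivity or uniform convexity of $X$, and no completeness.
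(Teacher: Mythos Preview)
Your proof is correct and follows essentially the same route as the paper's: both show $\Vert(x_{2n_k-1},y_{2n_k-1})-(x,y)\Vert\to dist(A,B)$ via the triangle inequality and Theorem~\ref{theorem2.3}, then feed the pair $((x_{2n_k-1},y_{2n_k-1}),(x,y))$ into the $p$-cyclic $\phi$-contraction inequality to obtain $\Vert(x_{2n_k},y_{2n_k})-(T(x,y),T(y,x))\Vert\to dist(A,B)$, and finally pass to the limit. The only cosmetic difference is that you carry out the argument componentwise and use a final triangle inequality, whereas the paper works in the product max-norm and appeals to (semi)continuity of the norm; the underlying idea is identical.
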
  
\begin{proof}
 Suppose that $\{(x_{2n_k},y_{2n_k})\}$ be a subsequence of $\{ (x_{2n},y_{2n})\}$ converging to $(x,y)\in A\times B$. Consider
 \begin{align*}
     dist(A,B)&\leq\max\{\Vert x-x_{2n_k-1}\Vert, \Vert y-y_{2n_k-1}\Vert\}\\
     &=\Vert (x,y)-(x_{2n_k-1},y_{2n_k-1})\Vert\\
     & \leq \Vert (x,y)-(x_{2n_k},y_{2n_k}) \Vert+\Vert (x_{2n_k},y_{2n_k})-(x_{2n_k-1},y_{2n_k-1})\Vert.
 \end{align*}
Using Theorem \ref{theorem2.3}, $\Vert (x_{2n_k},y_{2n_k})-(x_{2n_k-1},y_{2n_k-1}) \Vert \rightarrow dist(A,B)$ which gives $\Vert (x,y)-(x_{2n_k-1},y_{2n_k-1})\Vert \rightarrow d(A,B)$. Consider
 \begin{align*}
  dist(A,B)&\leq \Vert (x_{2n_k},y_{2n_k})-(T(x,y),T(y,x))\Vert \\
 & = \Vert (T(x_{2n_k-1},y_{2n_k-1}),T((y_{2n_k-1},x_{2n_k-1}))-(T(x,y),T(y,x))\Vert\\
 &= \max\{\Vert T(x_{2n_k-1},y_{2n_k-1})-T(x,y) \Vert,\Vert T(y_{2n_k-1},x_{2n_k-1})-T(y,x) \Vert\}\\
 &\leq \max\{\Vert(x_{2n_k-1},y_{2n_k-1})-(x,y) \Vert (y_{2n_k-1}, x_{2n_k-1})-(y,x) \Vert \}\\
 &= \Vert(x,y)-(x_{2n_k-1}, y_{2n_k-1})\Vert).  
 \end{align*}
This gives $\Vert (x_{2n_k},y_{2n_k})-(T(x,y),T(y,x))\Vert= dist(A,B)$. Since $\Vert. \Vert $ is weakly lower semi-continuous, $ \Vert (x,y)-(T(x,y),T(y,x))\Vert= dist(A,B)$. Consider 
 \begin{align*}
   dist(A,B) \leq \Vert x-T(x,y) \Vert&\leq \max \{ \Vert x-T(x,y) \Vert,  \Vert y-T(y,x) \Vert\}\\
   &= \Vert (x,y)-(T(x,y),T(y,x)) \Vert\\
   &=dist(A,B).
 \end{align*} 
  Therefore, $ \Vert x-T(x,y) \Vert=dist(A,B)$. Similarly, we can prove that $ \Vert y-T(y,x) \Vert=dist(A,B)$.
\end{proof}
\begin{lemma}\label{lemma2.5}
Let $A$ and $B$ be nonempty subsets of a uniformly convex Banach space $X$ such that $A$ is convex. Let $ T:(A \times B) \cup (B \times A) \rightarrow A \cup B$ be a $p$-cyclic $\phi$-contraction mapping. For $(x_{0},y_{0})  \in A \times B $, define $ x_{n} = T(x_{n-1},y_{n-1})$ and $y_{n} = T( y_{n-1},x_{n-1})$ for each  $n \in \mathbb{N}$. Then 
$$ \Vert (x_{2n+2},y_{2n+2}) - (x_{2n},y_{2n})\Vert \rightarrow 0 \thinspace \emph{and} \thinspace \Vert (x_{2n+3},y_{2n+3})-(x_{2n+1},y_{2n+1}) \Vert \rightarrow 0 \thinspace \emph{as} \thinspace n \to \infty.$$
\end{lemma}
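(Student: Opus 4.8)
The plan is to reduce the statement to the classical uniform-convexity estimate of Eldred--Veeramani type, applied one coordinate at a time, because the norm $\Vert(u,v)\Vert=\max\{\Vert u\Vert,\Vert v\Vert\}$ on $X\times X$ is not uniformly convex, so the argument cannot be run directly on the pairs. Write $d=dist(A,B)$. First I would record the parity pattern of the iterates: since $(x_0,y_0)\in A\times B$ and $T(A,B)\subseteq B$, $T(B,A)\subseteq A$, an easy induction gives $x_{2n}\in A$, $x_{2n+1}\in B$, $y_{2n}\in B$ and $y_{2n+1}\in A$ for every $n$. Next, Theorem \ref{theorem2.3} gives $\Vert(x_n,y_n)-(x_{n+1},y_{n+1})\Vert\to d$; since this equals $\max\{\Vert x_n-x_{n+1}\Vert,\Vert y_n-y_{n+1}\Vert\}$ and each of the two terms is already $\ge d$ (consecutive iterates always lie one in $A$ and one in $B$), it follows that $\Vert x_n-x_{n+1}\Vert\to d$ and $\Vert y_n-y_{n+1}\Vert\to d$.

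The engine of the proof is the following coordinatewise fact: if $\{u_n\},\{w_n\}$ lie in a convex set $C$, $\{v_n\}$ lies in a set $D$ with $dist(C,D)=d$, and $\Vert u_n-v_n\Vert\to d$ and $\Vert w_n-v_n\Vert\to d$, then $\Vert u_n-w_n\Vert\to 0$. If $d=0$ this is just the triangle inequality. If $d>0$, I would argue by contradiction: suppose $\Vert u_{n_k}-w_{n_k}\Vert\ge\varepsilon_0>0$ along a subsequence, and put $R_k=\max\{\Vert u_{n_k}-v_{n_k}\Vert,\Vert w_{n_k}-v_{n_k}\Vert\}$, so $R_k\to d$ and $\varepsilon_0\le\Vert u_{n_k}-w_{n_k}\Vert\le 2R_k$. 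Uniform convexity of $X$ gives $\big\Vert\frac{u_{n_k}+w_{n_k}}{2}-v_{n_k}\big\Vert\le\big(1-\delta(\varepsilon_0/R_k)\big)R_k$, while convexity of $C$ forces the left side to be $\ge d$. For $k$ large, $R_k\le 2d$, hence $\delta(\varepsilon_0/R_k)\ge\delta(\varepsilon_0/(2d))=:c$, and $c>0$ since $\delta$ is strictly increasing; therefore $d\le(1-c)R_k$, i.e. $R_k\ge d/(1-c)>d$, contradicting $R_k\to d$.

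It then remains to feed the right triples into this engine. For the first limit, apply it with $(u_n,w_n,v_n)=(x_{2n},x_{2n+2},x_{2n+1})$, $C=A$, $D=B$ to obtain $\Vert x_{2n+2}-x_{2n}\Vert\to 0$, and with $(y_{2n},y_{2n+2},y_{2n+1})$, $C=B$, $D=A$ to obtain $\Vert y_{2n+2}-y_{2n}\Vert\to 0$; then $\Vert(x_{2n+2},y_{2n+2})-(x_{2n},y_{2n})\Vert=\max\{\Vert x_{2n+2}-x_{2n}\Vert,\Vert y_{2n+2}-y_{2n}\Vert\}\to0$. The second limit is identical with all parities shifted by one, using $(x_{2n+1},x_{2n+3},x_{2n+2})$ with $C=B$ and $(y_{2n+1},y_{2n+3},y_{2n+2})$ with $C=A$. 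One caveat: the instances where $C=B$ use convexity of $B$ as well, so the hypothesis really should read ``$A$ and $B$ convex''. The hard part is the engine itself: since $\delta$ is only assumed strictly increasing (not continuous), one must freeze the uniform bound $c=\delta(\varepsilon_0/(2d))>0$ for large $k$ to close the contradiction, and one must avoid the tempting shortcut of applying uniform convexity on $X\times X$, which is invalid because the max norm there is not uniformly convex.
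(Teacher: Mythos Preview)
Your proof is correct and follows essentially the same route as the paper's: work coordinatewise, use Theorem~\ref{theorem2.3} to get $\Vert x_n-x_{n+1}\Vert\to d$ and $\Vert y_n-y_{n+1}\Vert\to d$, and run the uniform-convexity midpoint contradiction (the paper carries it out with explicit $\gamma,\epsilon$ choices rather than your $R_k$ packaging, but the mechanism is identical). Your caveat is well placed: the paper's ``similarly'' for $\Vert y_{2n+2}-y_{2n}\Vert$ and $\Vert x_{2n+3}-x_{2n+1}\Vert$ tacitly uses convexity of $B$ exactly as your engine does, so the hypothesis really should include $B$ convex; you also handle the case $d=0$ explicitly via the triangle inequality, which the paper's choice of $\epsilon$ does not cover.
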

\begin{proof}
First we show that $\Vert x_{2n+2}-x_{2n} \Vert \rightarrow 0$ as $n \rightarrow \infty$. On the contrary, assume that there exists $\epsilon_{0}>0$ such that for each $ k\geq 1$, there exits $n_{k}\geq k$ 
\begin{equation}\label{equation2.1}
\Vert (x_{2n_k+2}- x_{2n_k}\Vert \geq \epsilon_{0}. 
\end{equation}
Choose $0 < \gamma < 1 $ such that $\frac{\epsilon_{0}}{\gamma} > dist(A,B)$. Choose $\epsilon $ such that $ 0 < \epsilon < \min\{ \frac{\epsilon_{0}}{\gamma}-dist(A,B), \frac{dist(A,B)\delta(\gamma)}{1-\delta(\gamma)}\} $. By Theorem \ref{theorem2.3},  there exists $N_1$ such that
\begin{equation}\label{equation2.2}
\Vert x_{2n_k+2}-x_{2n_k+1}\Vert < dist(A,B)+ \epsilon \mbox{ for all }n_k\geq N_1. 
\end{equation}
Also, by Theorem \ref{theorem2.3}, there exists $N_{2}$ such that 
\begin{equation}\label{equation2.3}
\Vert x_{2n_k}-x_{2n_k+1}\Vert < dist(A,B) +\epsilon \mbox{ for all }n_k\geq N_2.    
\end{equation} 
Let $ N= \max\{N_{1},N_{2}\}$. Using \ref{equation2.1}-\ref{equation2.3} and uniform convexity of $X$, it follows that
\begin{align*}
\Big\Vert \frac{x_{2n_k+2}+x_{2n_k}}{2}-x_{2n_k+1}\Big\Vert \leq \Big(1- \delta\Big(\frac{\epsilon_{0}}{dist(A,B)+\epsilon}\Big)\Big)(dist(A,B)+\epsilon)
\end{align*}
As $\epsilon<\frac{\epsilon_0}{\gamma}-dist(A,B)$, we have $\gamma <\frac{\epsilon_0}{dist(A,B)+\epsilon}$. As $\delta$ is a strictly increasing function, $\delta(\gamma) <\delta\Big(\frac{\epsilon_0}{dist(A,B)+\epsilon}\Big)$. Therefore, 
\begin{align*}
\Big\Vert \frac{x_{2n_k+2}+x_{2n_k}}{2}-x_{2n_k+1}\Big\Vert \leq (1- \delta(\gamma))(dist(A,B)+\epsilon). 
\end{align*}
As $\epsilon<\frac{dist(A,B)\delta(\gamma)}{1-\delta(\gamma)}$. Therefore,
\begin{align*}
\Big\Vert \frac{x_{2n_k+2}+x_{2n_k}}{2}-x_{2n_k+1}\Big\Vert < (1- \delta(\gamma))dist(A,B)+dist(A,B)\delta(\gamma)
\end{align*}
which gives $\Big\Vert \frac{x_{2n_k+2}+x_{2n_k}}{2}-x_{2n_k+1}\Big\Vert < dist(A,B)$, a contradiction. Since $A$ is convex and $x_{2n_k+2}, x_{2n_k} \in A$, $\frac{x_{2n_k+2}}{x_{2n_k}}{2} \in A$. Also, $x_{2n_k+1} \in B$, $dist(A,B) \leq \Big\Vert \frac{x_{2n_k+2}+x_{2n_k}}{2}-x_{2n_k+1}\Big\Vert$. Thus, $\Vert x_{2n+2}-x_{2n} \Vert \rightarrow 0$ as $n \rightarrow \infty$. Similarly, we prove that $\Vert x_{2n+3}-x_{2n+1}\Vert \rightarrow 0$,  $\Vert y_{2n+2}-y_{2n} \Vert \rightarrow 0$ and $\Vert y_{2n+3}-y_{2n+1}\Vert \rightarrow 0$ as $n \rightarrow \infty$. Hence, $\Vert (x_{2n+2},y_{2n+2})-(x_{2n},y_{2n})\Vert \rightarrow 0$ and  $\Vert (x_{2n+3},y_{2n+3})-(x_{2n+1},y_{2n+1})\Vert \rightarrow 0$ as $n \rightarrow \infty$.
\end{proof}
\begin{theorem}\label{theorem2.6}
 Let $A$ and $B$ be nonempty subsets of a uniformly convex Banach space $X$ such that $A$ is convex. Let $ T: (A \times B) \cup (B \times A) \rightarrow A \cup B $ be a $p$-cyclic $\phi$-contraction mapping. For $(x_{0},y_{0})  \in A \times B $, define $ x_{n} = T(x_{n-1},y_{n-1})$ and $y_{n} = T( y_{n-1},x_{n-1})$ for each  $n \in \mathbb{N}$. Then for each $\epsilon > 0$, there is a positive integers $N$ such that  for all  $m > n \geq N$
  $$ \Vert (x_{2m}, y_{2m}) - (x_{2n+1},y_{2n+1}) \Vert < dist(A,B)+ \epsilon.$$
\end{theorem}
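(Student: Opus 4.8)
The proof is by contradiction. Write $d=dist(A,B)$ and suppose the conclusion fails: there is $\epsilon>0$ such that for every $N$ one can find $m>n\geq N$ with $\|(x_{2m},y_{2m})-(x_{2n+1},y_{2n+1})\|\geq d+\epsilon$. Applying this with $N=k$ gives, for each $k$, an index $n_k\geq k$ and then the \emph{smallest} $m_k>n_k$ for which $\|(x_{2m_k},y_{2m_k})-(x_{2n_k+1},y_{2n_k+1})\|\geq d+\epsilon$. Since $\|(x_{2n+2},y_{2n+2})-(x_{2n+1},y_{2n+1})\|\to d$ by Theorem~\ref{theorem2.3}, for all large $k$ we must have $m_k\geq n_k+2$, and minimality of $m_k$ then forces $\|(x_{2m_k-2},y_{2m_k-2})-(x_{2n_k+1},y_{2n_k+1})\|<d+\epsilon$. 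A squeeze via the triangle inequality and Lemma~\ref{lemma2.5} (which yields $\|(x_{2m_k},y_{2m_k})-(x_{2m_k-2},y_{2m_k-2})\|\to 0$) now shows that both $\|(x_{2m_k-2},y_{2m_k-2})-(x_{2n_k+1},y_{2n_k+1})\|$ and $\|(x_{2m_k},y_{2m_k})-(x_{2n_k+1},y_{2n_k+1})\|$ converge to $d+\epsilon$.

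The core of the argument is to prove that $E_k:=\|(x_{2m_k-1},y_{2m_k-1})-(x_{2n_k},y_{2n_k})\|$ also converges to $d+\epsilon$. Using the recursions $x_{2m_k}=T(x_{2m_k-1},y_{2m_k-1})$, $y_{2m_k}=T(y_{2m_k-1},x_{2m_k-1})$, $x_{2n_k+1}=T(x_{2n_k},y_{2n_k})$, $y_{2n_k+1}=T(y_{2n_k},x_{2n_k})$, together with the fact that the max-norm on $X\times X$ is symmetric in its two slots, the $p$-cyclic $\phi$-contraction inequality applied in each coordinate collapses to the single bound
\[
\|(x_{2m_k},y_{2m_k})-(x_{2n_k+1},y_{2n_k+1})\|\leq E_k-\phi(E_k)+\phi(d).
\]
Since every point of $A$ is at distance $\geq d$ from every point of $B$, we have $E_k\geq d$, so $\phi(E_k)\geq\phi(d)$ and the same inequality gives the nonexpansive bound $\|(x_{2m_k},y_{2m_k})-(x_{2n_k+1},y_{2n_k+1})\|\leq E_k$; letting $k\to\infty$ gives $\liminf_k E_k\geq d+\epsilon$. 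For the reverse inequality the direct triangle inequality through $(x_{2m_k},y_{2m_k})$ and $(x_{2n_k+1},y_{2n_k+1})$ is too wasteful (it only produces $3d+\epsilon$); instead I would split $E_k$, via the triangle inequality, into $\|(x_{2m_k-1},y_{2m_k-1})-(x_{2m_k+1},y_{2m_k+1})\|$, $\|(x_{2m_k+1},y_{2m_k+1})-(x_{2n_k+2},y_{2n_k+2})\|$, and $\|(x_{2n_k+2},y_{2n_k+2})-(x_{2n_k},y_{2n_k})\|$. The first and third pieces are ``same parity, indices differing by $2$'' distances, hence tend to $0$ by Lemma~\ref{lemma2.5}; the middle piece has each coordinate equal to $T$ applied to $(x_{2m_k},y_{2m_k})$ and to $(x_{2n_k+1},y_{2n_k+1})$, so by nonexpansiveness it is at most $\|(x_{2m_k},y_{2m_k})-(x_{2n_k+1},y_{2n_k+1})\|\to d+\epsilon$. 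Hence $\limsup_k E_k\leq d+\epsilon$, and $E_k\to d+\epsilon$.

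To finish, for all large $k$ we have $E_k>d+\tfrac{\epsilon}{2}$, so $\phi(E_k)>\phi(d+\tfrac{\epsilon}{2})$ by strict monotonicity, and the displayed estimate becomes $\|(x_{2m_k},y_{2m_k})-(x_{2n_k+1},y_{2n_k+1})\|\leq E_k-\phi(d+\tfrac{\epsilon}{2})+\phi(d)$. Passing to the limit yields $d+\epsilon\leq(d+\epsilon)-\phi(d+\tfrac{\epsilon}{2})+\phi(d)$, i.e.\ $\phi(d+\tfrac{\epsilon}{2})\leq\phi(d)$, which contradicts the strict monotonicity of $\phi$. The one genuinely non-routine point, and the main obstacle, is the upper estimate for $E_k$: one must avoid the direct triangle inequality and push both reference points forward by one index, so that the error terms are controlled by the ``gap $\to 0$'' conclusion of Lemma~\ref{lemma2.5} rather than by the weaker ``gap $\to d$'' conclusion of Theorem~\ref{theorem2.3}; the remainder is routine bookkeeping with index parities and the max-norm on $X\times X$.
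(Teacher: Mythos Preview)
Your argument is correct and follows the same overall scheme as the paper's proof: negate the conclusion, pick $m_k$ minimal, use Lemma~\ref{lemma2.5} to squeeze $\|(x_{2m_k},y_{2m_k})-(x_{2n_k+1},y_{2n_k+1})\|\to d+\epsilon$, and then extract a contradiction from the $\phi$-contraction inequality. The only difference is organizational. You pull the indices \emph{back} by one, introducing the auxiliary quantity $E_k=\|(x_{2m_k-1},y_{2m_k-1})-(x_{2n_k},y_{2n_k})\|$, and then spend a separate $\liminf/\limsup$ argument (your step~6) to show $E_k\to d+\epsilon$ before closing the contradiction. The paper instead pushes the indices \emph{forward}: it inserts $(x_{2m_k+2},y_{2m_k+2})$ and $(x_{2n_k+3},y_{2n_k+3})$ via the triangle inequality, drops one step with Lemma~\ref{lemma 2.2}(iii), and then applies the $\phi$-contraction directly to the pair $(x_{2m_k},y_{2m_k}),(x_{2n_k+1},y_{2n_k+1})$ itself, obtaining a self-referential inequality that immediately forces $\lim_k\phi(\|(x_{2m_k},y_{2m_k})-(x_{2n_k+1},y_{2n_k+1})\|)=\phi(d)$. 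This sidesteps any auxiliary limit computation and is a little shorter, but your route is equally valid and uses exactly the same ingredients.
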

\begin{proof}
On the contrary, assume that there exits $\epsilon_{0}$ such that for each $k\geq1$, there is $m_{k} > n_{k}\geq k$ satisfying
 \begin{equation}\label{equation2.4}
   \Vert (x_{{2m_k}}, y_{{2m_k}}) - (x_{{2n_k+1}}, y_{{2n_k+}}) \Vert \geq dist(A,B)+\epsilon_{0}  
 \end{equation}
  and
  \begin{equation}\label{equation 2.5}
    \Vert(x_{2(m_k-1)}, y_{2(m_k-1)})-(x_{2n_k+1},y_{2n_k+)}) \Vert < dist(A,B) + \epsilon_{0}.   
  \end{equation}
 Using (\ref{equation2.4}) and (\ref{equation 2.5}), we have
 \begin{align*}
  dist(A,B)+\epsilon_{0}& \leq \Vert (x_{2m_k},y_{2m_k})-(x_{2n_k+1}, y_{2n_k+1}) \Vert \\
  & \leq \Vert (x_{2m_k},y_{2m_k})-(x_{2(m_k-1)},y_{2(m_k-1)})\Vert + \Vert (x_{2(m_k-1)},y_{2(m_k-1)})- (x_{2n_k+1},y_{2n_k+1}) \Vert \\
  & < \Vert (x_{2m_k},y_{2m_k})-(x_{2(m_k-1)},y_{2(m_k-1)}) \Vert+dist(A,B) +\epsilon_{0}.
 \end{align*}
Letting $ k \rightarrow \infty$ and using Lemma  \ref{lemma2.5}, we get 
\begin{equation}\label{equation2.6}
        \lim_{k \to \infty} \Vert (x_{2m_k}, y_{2m_k})-(x_{2n_k+1}, y_{2n_k+1}) \Vert =dist(A,B)+ \epsilon_{0}. 
    \end{equation}    
Using \ref{lemma 2.2}(iii) and $p$-cyclic $\phi$-contraction property of $T$, we have
 \begin{align*}
  \Vert(x_{2m_k},y_{2m_k})-(x_{2n_k+1},y_{2n_k+1})\Vert & \leq \Vert (x_{2m_k},y_{2m_k})-(x_{2m_k+2},y_{2m_k+2}) \Vert +\Vert (x_{2m_k+2},y_{2m_k+2})\\
  & \quad -(x_{2n_k+3},y_{2n_k+3})\Vert + \Vert (x_{2n_k+3},y_{2n_k+3})-(x_{2n_k+1},y_{2n_k+1})\Vert\\
  & \leq \Vert (x_{2m_k},y_{2m_k})-(x_{2m_k+2},y_{2m_k+2}) \Vert + \Vert (x_{2m_k+1},y_{2m_k+1})\\
  & \quad -(x_{2n_k+2},y_{2n_k+2})\Vert +\Vert (x_{2n_k+3},y_{2n_k+3})-(x_{2n_k+1},y_{2n_k+1}) \Vert\\
  & = \Vert (x_{2m_k},y_{2m_k})-(x_{2m_k+2},y_{2m_k+2}) \Vert + \Vert (T(x_{2m_k},y_{2m_k}), T(y_{2m_k},\\
  & \quad \quad x_{2m_k}))-(T(x_{2n_k+1},y_{2n_k+1}), T(y_{2n_k+1},x_{2n_k+1}))\Vert +\Vert (x_{2n_k+3},\\
  & \quad \quad y_{2n_k+3}) -(x_{2n_k+1},y_{2n_k+1}) \Vert\\
  & \leq \Vert (x_{2m_k},y_{2m_k})-(x_{2m_k+2},y_{2m_k+2}) \Vert + \Vert (x_{2m_k},y_{2m_k})-(x_{2n_k+1},\\
  & \quad \quad  y_{2n_k+})\Vert- \phi(\Vert (x_{2m_k},y_{2m_k})-(x_{2n_k+1},y_{2n_k+}))\Vert  \\ & \quad + \phi(dist(A,B))+\Vert (x_{2n_k+3},y_{2n_k+3})
   -(x_{2n_k+1},y_{2n_k+1}) \Vert. 
 \end{align*}
  Letting $ k \rightarrow \infty $, using (\ref{equation2.4}), (\ref{equation2.6}) and Lemma \ref{lemma2.5}, we obtain
\begin{align*}
        dist(A,B)+ \epsilon_{0} & \leq dist(A,B)+\epsilon_{0}- \lim_{k \to \infty} \phi(\Vert (x_{2m_k},y_{2m_k})-(x_{2n_k+1},y_{2n_k+})\Vert)+ \phi(dist(A,B))\\
 & \leq dist(A,B) +\epsilon_{0}
\end{align*}
which implies that 
\begin{equation}\label{equation2.7}
   \lim_{k \to \infty} \phi(\Vert (x_{2m_k},y_{2m_k})-(x_{2n_k+1},y_{2n_k+1})\Vert)= \phi(dist(A,B)).   
\end{equation}
Since $ \phi$ is strictly increasing, using (\ref{equation2.4}) and (\ref{equation2.7}) it follows that
\begin{align*}
\phi(dist(A,B)+\epsilon_{0}) & \leq \lim_{k \to \infty} \phi(\Vert (x_{2m_k},y_{2m_k})-(x_{2n_k+1},y_{2n_k+})\Vert)\\
& = \phi(dist(A,B))\\
& < \phi(dist(A,B)+ \epsilon_{0}),
\end{align*}
a contradiction. Hence, for each $\epsilon > 0$, there is a positive integers $N$ such that  for all  $m > n \geq N$, we have $ \Vert (x_{2m}, y_{2m}) - (x_{2n+1},y_{2n+1}) \Vert < dist(A,B)+ \epsilon.$    
\end{proof}
\begin{theorem}\label{theorem2.7}
Let $A$ and $B$ be nonempty closed subsets of a uniformly convex Banach space $X$. Let $T:(A \times B) \cup (B \times A) \rightarrow A \cup B $ be a $p$-cyclic $\phi$-contraction mapping. For $(x_{0},y_{0})  \in A \times B $, define $ x_{n} = T(x_{n-1},y_{n-1})$ and $y_{n} = T( y_{n-1},x_{n-1})$ for each  $n \in \mathbb{N}$. If $dist(A,B)=0$, then $T$ has a unique coupled fixed point $(x,y) \in A \times B$ and $x_n \rightarrow x$ and $y_n \rightarrow y$ as $n \rightarrow \infty$. 
\end{theorem}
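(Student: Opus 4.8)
Since $dist(A,B)=0$, a coupled best proximity point of $T$ is exactly a coupled fixed point, so the plan has three stages: first, show that the iteration $\{(x_n,y_n)\}$ is Cauchy in $X\times X$ and identify its limit $(x,y)$ as a point whose two coordinates both lie in $A\cap B$; second, deduce from Theorem~\ref{theorem2.4} that $(x,y)$ is a coupled fixed point with $x_n\to x$ and $y_n\to y$; third, establish uniqueness directly from the defining inequality.

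For the first stage I would argue as follows. By Theorem~\ref{theorem2.3}, $\Vert(x_n,y_n)-(x_{n+1},y_{n+1})\Vert\to dist(A,B)=0$, and by Theorem~\ref{theorem2.6}, for every $\epsilon>0$ there is an $N$ with $\Vert(x_{2m},y_{2m})-(x_{2n+1},y_{2n+1})\Vert<\epsilon$ whenever $m>n\geq N$. Inserting the term $(x_{2n+1},y_{2n+1})$ between $(x_{2m},y_{2m})$ and $(x_{2n},y_{2n})$ and combining the triangle inequality with these two facts shows $\{(x_{2n},y_{2n})\}$ is Cauchy; as $(X\times X,\Vert\cdot\Vert)$ is complete it converges to some $(x,y)$, whereupon $\Vert(x_n,y_n)-(x_{n+1},y_{n+1})\Vert\to0$ forces the whole sequence $\{(x_n,y_n)\}$ to converge to $(x,y)$. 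Since $x_{2n}\in A$, $x_{2n+1}\in B$, $y_{2n}\in B$, $y_{2n+1}\in A$ and $A,B$ are closed, both $x$ and $y$ belong to $A\cap B$; in particular $(x,y)\in A\times B$.

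For the second stage, the sequence $\{(x_{2n},y_{2n})\}$ converges, hence has a convergent subsequence, in $A\times B$, so Theorem~\ref{theorem2.4} applies; since its proof yields precisely the limit of that subsequence and $dist(A,B)=0$, we get $\Vert x-T(x,y)\Vert=0$ and $\Vert y-T(y,x)\Vert=0$, i.e.\ $T(x,y)=x$ and $T(y,x)=y$, while $x_n\to x$, $y_n\to y$ were shown in the first stage. For uniqueness, suppose $(u,v)\in A\times B$ also satisfies $T(u,v)=u$ and $T(v,u)=v$, and put $d=\Vert(x,y)-(u,v)\Vert=\max\{\Vert x-u\Vert,\Vert y-v\Vert\}=\Vert(y,x)-(v,u)\Vert$. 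Applying condition (ii) of the definition of a $p$-cyclic $\phi$-contraction mapping to the pairs $(x,y),(u,v)$ and to $(y,x),(v,u)$ — admissible since all four coordinates lie in $A\cap B$, so each pair sits in both $A\times B$ and $B\times A$ — yields $\Vert x-u\Vert\leq d-\phi(d)+\phi(0)$ and $\Vert y-v\Vert\leq d-\phi(d)+\phi(0)$, hence $d\leq d-\phi(d)+\phi(0)$, i.e.\ $\phi(d)\leq\phi(0)$. As $\phi$ is strictly increasing and $d\geq0$, this forces $d=0$, so $(x,y)=(u,v)$.

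No genuine obstacle is expected here: the statement is essentially a corollary of Theorems~\ref{theorem2.3}, \ref{theorem2.4} and~\ref{theorem2.6}. The one point that needs care is the uniqueness step, where one uses that the limit has both coordinates in $A\cap B$ in order to legitimately apply the contraction inequality to the two pairs above; the rest of stage one is routine Cauchy-sequence bookkeeping.
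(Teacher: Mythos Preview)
Your argument mirrors the paper's: both combine Theorems~\ref{theorem2.3} and~\ref{theorem2.6} with the triangle inequality to show $\{(x_{2n},y_{2n})\}$ is Cauchy, invoke Theorem~\ref{theorem2.4} to produce the coupled fixed point, and then derive uniqueness from the defining inequality with $\phi(0)$ on the right. You are in fact slightly more careful than the paper on two points: you explicitly deduce that the \emph{full} sequence $\{(x_n,y_n)\}$ converges (which the theorem asserts but the paper's proof leaves implicit), and in the uniqueness step you justify applying the contraction inequality by observing that any coupled fixed point has both coordinates in $A\cap B$ (since $u=T(u,v)\in T(A,B)\subseteq B$ and $v=T(v,u)\in T(B,A)\subseteq A$), so the relevant pairs lie in both $A\times B$ and $B\times A$; the paper simply applies the inequality without comment. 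One wrinkle shared by your write-up and the paper: Theorem~\ref{theorem2.6} is stated under the hypothesis that $A$ is convex, which Theorem~\ref{theorem2.7} does not assume. When $dist(A,B)=0$ the conclusion of Lemma~\ref{lemma2.5} (and hence of Theorem~\ref{theorem2.6}) in fact follows directly from Theorem~\ref{theorem2.3} via the triangle inequality, so no real harm is done, but strictly speaking the black-box invocation is formally unjustified in both arguments.
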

\begin{proof}
Let $\epsilon>0$ be given. By Theorem \ref{theorem2.3}, there exists $N_1$ such that
\begin{equation}\label{equation2.8}
\Vert (x_n,y_n)-(x_{n+1},y_{n+1}) \Vert < \epsilon  \mbox{ for all }  n\geq N_1.
\end{equation}
By Theorem \ref{theorem2.6}, there exists $N_2$ such that
\begin{equation}\label{equation2.9}
\Vert (x_{2m},y_{2m})-(x_{2n+1},y_{2n+1})\Vert < \epsilon \mbox{ for all }  m>n\geq N_2.
\end{equation}
Let $N=\max\{N_1,N_2\}$. For $m>n\geq N$, using (\ref{equation2.8}) and (\ref{equation2.9}), we have
\begin{align*}
\Vert (x_{2m},y_{2m})-(x_{2n},y_{2n})\Vert & \leq \Vert (x_{2m},y_{2m})-(x_{2n+1},y_{2n+1})\Vert+\Vert (x_{2n+1},y_{2n+1})-(x_{2n},y_{2n})\Vert\\
&<  2 \epsilon.
\end{align*}
This gives $\{(x_{2n},y_{2n})$ is a Cauchy sequence. Since $X$ is complete and $A$ and $B$ are closed subsets of $X$, $A$ and $B$ are complete. This implies that $\{(x_{2n},y_{2n})$ is convergent in $A \times B$. By Theorem \ref{theorem2.4}, there exists $(x,y) \in A \times B$ such that $\Vert x-T(x,y) \Vert=0$ and $\Vert y-T(y,x)\Vert=0$ which gives $T(x,y)=x$ and $T(y,x)=y$. Therefore, $(x,y)$ is a coupled fixed point of $T$. 

To establish the uniqueness of coupled fixed point of $T$, let $(x^*,y^*)$ be another coupled fixed point of $T$. Consider
\begin{align*}
\Vert x-x^* \Vert&=\Vert T(x,y)-T(x^*,y^*)\Vert \\
& \leq \Vert (x,y)-(x^*,y^*) \Vert -\phi(\Vert (x,y)-(x^*,y^*) \Vert)+\phi(0).
\end{align*}
Similarly, we have 
$$\Vert y-y^* \Vert \leq \Vert (x,y)-(x^*,y^*) \Vert -\phi(\Vert (x,y)-(x^*,y^*) \Vert)+\phi(0).$$
Therefore, $\Vert (x,y)-(x^*,y^*) \Vert \leq \Vert (x,y)-(x^*,y^*) \Vert -\phi(\Vert (x,y)-(x^*,y^*) \Vert)+\phi(0)$. This gives
\begin{equation}\label{equation2.10}
\phi(\Vert (x,y)-(x^*,y^*) \Vert) \leq \phi(0). 
\end{equation}
Since $\Vert x-x^*\Vert>0$ and $\Vert y-y^* \Vert>0$, $\Vert (x,y)-(x^*,y^*)\Vert >0$. As $\phi$ is a strictly incresing mapping,
\begin{equation}\label{equation2.11}
\phi(0)< \phi(\Vert (x,y)-(x^*,y^*) \Vert)
\end{equation}
Using (\ref{equation2.10}) and (\ref{equation2.11}), it follows that $\phi(0)<\phi(0)$, a contradiction. Hence, the coupled fixed point of $T$ is unique. 
\end{proof}
\begin{theorem}\label{theorem2.8}
Let $A$ and $B$ be nonempty closed and convex subsets of a uniformly convex Banach space $X$. Let $T: (A \times B) \cup (B \times A) \rightarrow A \cup B $ be a $ p$-cyclic $\phi$-contraction mapping. For $(x_{0},y_{0})  \in A \times B $, define $ x_{n} = T(x_{n-1},y_{n-1})$ and $y_{n} = T( y_{n-1},x_{n-1})$ for each  $n \in \mathbb{N}$.  Then sequence $\{(x_{2n},y_{2n}\})$ and $\{(x_{2n+1},y_{2n+1})\}$ are Cauchy sequences in $A \times B$ and $B \times A$, respectively.
\end{theorem}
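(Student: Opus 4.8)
The plan is to reduce the statement to showing that each coordinate sequence is Cauchy: since the norm on $X\times X$ is $\Vert(x,y)\Vert=\max\{\Vert x\Vert,\Vert y\Vert\}$, the pair $\{(x_{2n},y_{2n})\}$ is Cauchy in $A\times B$ precisely when $\{x_{2n}\}$ is Cauchy in $A$ and $\{y_{2n}\}$ is Cauchy in $B$, and likewise for the odd sequence in $B\times A$. Throughout one uses the membership pattern forced by condition (i) of the definition together with $(x_0,y_0)\in A\times B$, namely $x_{2n}\in A$, $x_{2n+1}\in B$, $y_{2n}\in B$, $y_{2n+1}\in A$. First I would dispose of the degenerate case $dist(A,B)=0$: there, combining Theorem \ref{theorem2.3} with Theorem \ref{theorem2.6} and the triangle inequality (exactly as in the proof of Theorem \ref{theorem2.7}) gives $\Vert(x_{2m},y_{2m})-(x_{2n},y_{2n})\Vert<2\epsilon$ for $m>n$ sufficiently large, and similarly for the odd indices, so from now on we may assume $dist(A,B)>0$.

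The core is a contradiction argument patterned on Lemma \ref{lemma2.5}. Suppose $\{x_{2n}\}$ is not Cauchy; then there are $\epsilon_0>0$ and indices $m_k>n_k\geq k$ with $\Vert x_{2m_k}-x_{2n_k}\Vert\geq\epsilon_0$. Take the anchor point $z_k:=x_{2n_k+1}\in B$. By Theorem \ref{theorem2.3}, $\Vert x_{2n_k}-x_{2n_k+1}\Vert\leq\Vert(x_{2n_k},y_{2n_k})-(x_{2n_k+1},y_{2n_k+1})\Vert<dist(A,B)+\epsilon$ for all large $k$; by Theorem \ref{theorem2.6} (using $m_k>n_k\geq N$), $\Vert x_{2m_k}-x_{2n_k+1}\Vert\leq\Vert(x_{2m_k},y_{2m_k})-(x_{2n_k+1},y_{2n_k+1})\Vert<dist(A,B)+\epsilon$ for all large $k$; and $\Vert x_{2m_k}-x_{2n_k}\Vert\leq 2(dist(A,B)+\epsilon)$ automatically. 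Applying the uniform convexity of $X$ with $R=dist(A,B)+\epsilon$ and $r=\epsilon_0$ yields $\Vert\frac{x_{2m_k}+x_{2n_k}}{2}-x_{2n_k+1}\Vert\leq\bigl(1-\delta(\frac{\epsilon_0}{dist(A,B)+\epsilon})\bigr)(dist(A,B)+\epsilon)$. Choosing $\gamma\in(0,1)$ with $\frac{\epsilon_0}{\gamma}>dist(A,B)$ and then $\epsilon<\min\{\frac{\epsilon_0}{\gamma}-dist(A,B),\ \frac{dist(A,B)\,\delta(\gamma)}{1-\delta(\gamma)}\}$ — the same thresholds as in Lemma \ref{lemma2.5} — makes the right-hand side strictly less than $dist(A,B)$. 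But $\frac{x_{2m_k}+x_{2n_k}}{2}\in A$ by convexity of $A$ and $x_{2n_k+1}\in B$, so the left-hand side is at least $dist(A,B)$, a contradiction. The identical argument with anchor $y_{2n_k+1}\in A$ and convexity of $B$ shows $\{y_{2n}\}$ is Cauchy, hence $\{(x_{2n},y_{2n})\}$ is Cauchy in $A\times B$.

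For the odd sequence $\{(x_{2n+1},y_{2n+1})\}\subseteq B\times A$ I would run the same scheme with the roles of $A$ and $B$ swapped: to prove $\{x_{2n+1}\}$ Cauchy, use the anchor $x_{2n+2}=T(x_{2n+1},y_{2n+1})\in A$, together with $\Vert x_{2n+1}-x_{2n+2}\Vert<dist(A,B)+\epsilon$ from Theorem \ref{theorem2.3} and $\Vert x_{2m+1}-x_{2n+2}\Vert=\Vert T(x_{2m},y_{2m})-T(x_{2n+1},y_{2n+1})\Vert\leq\Vert(x_{2m},y_{2m})-(x_{2n+1},y_{2n+1})\Vert<dist(A,B)+\epsilon$ (the middle inequality from the $p$-cyclic $\phi$-contraction inequality, since $\Vert(x_{2m},y_{2m})-(x_{2n+1},y_{2n+1})\Vert\geq dist(A,B)$ makes the $\phi$-terms cancel favourably, cf. Lemma \ref{lemma 2.2}(ii), and the last from Theorem \ref{theorem2.6}), noting $\frac{x_{2m+1}+x_{2n+1}}{2}\in B$ by convexity of $B$; the $\gamma$–$\epsilon$ bookkeeping and the uniform convexity contradiction are then verbatim, and the same works for $\{y_{2n+1}\}$ with anchor $y_{2n+2}\in B$. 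The only place needing genuine care — and the main, rather mild, obstacle — is making the two thresholds on $\epsilon$ simultaneously satisfiable so that uniform convexity actually produces the contradiction, and remembering to peel off $dist(A,B)=0$ first, since uniform convexity gives no contradiction in that case; everything else is routine manipulation with the maximum norm and the three estimates supplied by Theorems \ref{theorem2.3} and \ref{theorem2.6} and the convexity argument of Lemma \ref{lemma2.5}.
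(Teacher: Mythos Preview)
Your proposal is correct and follows essentially the same approach as the paper: dispose of $dist(A,B)=0$ via Theorem~\ref{theorem2.7}, then for $dist(A,B)>0$ assume a coordinate sequence (say $\{x_{2n}\}$) is not Cauchy, use Theorems~\ref{theorem2.3} and \ref{theorem2.6} to bound both $\Vert x_{2n_k}-x_{2n_k+1}\Vert$ and $\Vert x_{2m_k}-x_{2n_k+1}\Vert$ by $dist(A,B)+\epsilon$, and feed this into uniform convexity with the same $\gamma$--$\epsilon$ thresholds as in Lemma~\ref{lemma2.5} to force the midpoint below $dist(A,B)$. The paper handles the remaining sequences with a bare ``similarly''; your explicit treatment of the odd-index case via $\Vert x_{2m+1}-x_{2n+2}\Vert=\Vert T(x_{2m},y_{2m})-T(x_{2n+1},y_{2n+1})\Vert\leq\Vert(x_{2m},y_{2m})-(x_{2n+1},y_{2n+1})\Vert$ is a legitimate way to make that step precise.
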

\begin{proof}
If $dist(A,B)=0$, then the result follows from Theorem \ref{theorem2.7}. Therefore, suppose that $dist(A,B)>0$. We will prove that $\{(x_{2n},y_{2n})\}$ is a Cauchy sequence in $A \times B$. On the contrary, assume that $\{x_{2n}\}$ is not a Cauchy sequence in $A$. Then there exists $\epsilon_{0}>0$ such that $k \geq 1$, there exist $m_{k}> n_{k}\geq k$ such that
\begin{equation}\label{equation2.12}
\Vert x_{{2m_k}}- x_{{2n_k}}\Vert \geq \epsilon_{0}. 
\end{equation}
Choose  $0< \gamma<1$ such that $\frac{\epsilon_0}{\gamma} > dist(A,B)$ and choose $\epsilon$ such that
$$0< \epsilon <  \min \Big \{ \frac{\epsilon_{0}}{\gamma}-dist(A,B),\frac{dist(A,B)\delta(\gamma)}{1-\delta(\gamma)} \Big\}.$$ 
By Theorem \ref{theorem2.3}, there exits $N_{1}$ such that
 \begin{equation}\label{equation2.13}
  \Vert x_{{2n_k}}- x_{{2n_k+1}} \Vert < dist(A,B)+\epsilon \mbox{ for all } n_k \geq N_1.
  \end{equation}
By Theorem \ref{theorem2.6}, there exits $N_{2}$ such that
\begin{equation}\label{equation2.14}
\Vert x_{{2m_k}} - x_{{2n_k+1}} \Vert < dist(A,B)+\epsilon \mbox{ for all } m_{k} > n_{k} \geq N_{2}.
\end{equation}
Let $ N = \max\{N_{1},N_{2}\}$. Using (\ref{equation2.12})-(\ref{equation2.14}) and the uniform convexity of $X$, we have 
$$\Big \Vert \frac{x_{{2m_k}}+x_{{2n_k}}}{2}-x_{{2n_k+1}} \Big\Vert \leq \Big(1-\delta\Big(\frac{\epsilon_0}{dist(A,B)+\epsilon}\Big)\Big)(dist(A,B)+\epsilon) $$
As $\epsilon<\frac{\epsilon_0}{\gamma}-dist(A,B)$, we have $\gamma<\frac{\epsilon_0}{\epsilon+dist(A,B)}$. As $\delta$ is a strictly increasing function, $\delta(\gamma) <\delta\Big(\frac{\epsilon_0}{dist(A,B)+\epsilon}\Big)$. This gives $$\Big\Vert \frac{x_{{2m_k}}+x_{{2m_k}}}{2}- x_{{2n_k+1}}  \Big\Vert \leq (1-\delta(\gamma))(dist(A,B)+\epsilon).$$
 As $\epsilon<\frac{dist(A,B)\delta(\gamma)}{1-\delta(\gamma)}$,  
$$ \Big \Vert \frac{x_{{2m_k}}+x_{{2n_k}}}{2}-x_{{2n_k+1}} \Big\Vert  < (1-\delta(\gamma))dist(A,B)+dist(A,B)\delta(\gamma)$$
which gives $\Big \Vert \frac{x_{{2m_k}}+x_{{2n_k}}}{2}-x_{{2n_k+1}} \Big\Vert  < dist(A,B)$, a contradiction. Similarly, we can prove that $\{y_{2n+1}\}$ is a Cauchy sequence in $A$ and $\{y_{2n}\}$ and $\{x_{2n+1}\}$ are Cauchy sequences in $B$. 
\end{proof} 
The following lemma will be used in the sequel to prove the existence of coupled best proximity point of a a $p$-cyclic $\phi$-contraction mapping in the setting of a uniformly convex Banach space.
\begin{lemma}\label{lemma2.9}
\cite[Lemma 2.5]{8} Let $A$ and $B$ be nonempty closed and convex subset of a uniformly convex Banach space $X$. Let $\{(x_{n},y_{n})\}$ and $(w_{n},z_{n})\}$ be sequences in $A \times B$ and $(u_{n},v_{n})$ be a sequence in $B \times A$ satisfying

(i) $ \Vert (x_{n},y_{n})- (u_{n},v_{n}) \Vert \rightarrow dist(A,B)$,

(ii)  $ \Vert (w_{n},z_{n}) - (u_{n},v_{n}) \Vert \rightarrow dist(A,B)$,\\
then $ \Vert (x_{n},y_{n}) - (w_{n},z_{n}) \Vert \rightarrow 0$. 
\end{lemma}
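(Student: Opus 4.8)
The plan is to exploit the special structure of the max-norm on $X\times X$ and reduce the statement to two separate applications of the classical single-space uniform-convexity lemma, since $(X\times X,\Vert\cdot\Vert)$ with $\Vert(x,y)\Vert=\max\{\Vert x\Vert,\Vert y\Vert\}$ is itself never uniformly convex and so the lemma cannot be applied directly in the product space. First I would note the trivial lower bounds coming from the cyclic structure: since $x_n\in A$ and $u_n\in B$ we have $\Vert x_n-u_n\Vert\geq dist(A,B)$, and since $y_n\in B$ and $v_n\in A$ we have $\Vert y_n-v_n\Vert\geq dist(A,B)$. Hypothesis (i) says $\max\{\Vert x_n-u_n\Vert,\Vert y_n-v_n\Vert\}\to dist(A,B)$, and together with these two lower bounds this forces both $\Vert x_n-u_n\Vert\to dist(A,B)$ and $\Vert y_n-v_n\Vert\to dist(A,B)$. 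Applying the same reasoning to hypothesis (ii) gives $\Vert w_n-u_n\Vert\to dist(A,B)$ and $\Vert z_n-v_n\Vert\to dist(A,B)$.

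Next I would invoke the classical fact that in a uniformly convex Banach space $X$, if $C$ is convex, $\{p_n\},\{q_n\}\subseteq C$ and $\{r_n\}\subseteq D$ satisfy $\Vert p_n-r_n\Vert\to dist(C,D)$ and $\Vert q_n-r_n\Vert\to dist(C,D)$, then $\Vert p_n-q_n\Vert\to 0$. Applying this once to $p_n=x_n$, $q_n=w_n\in A$, $r_n=u_n\in B$ yields $\Vert x_n-w_n\Vert\to 0$; applying it a second time with the roles of $A$ and $B$ interchanged, to $p_n=y_n$, $q_n=z_n\in B$ and $r_n=v_n\in A$, yields $\Vert y_n-z_n\Vert\to 0$. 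Combining, $\Vert(x_n,y_n)-(w_n,z_n)\Vert=\max\{\Vert x_n-w_n\Vert,\Vert y_n-z_n\Vert\}\to 0$, which is the desired conclusion.

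Finally, the single-space lemma can either be cited (Eldred–Veeramani) or proved in line. If $dist(A,B)=0$ it is immediate from the triangle inequality. If $dist(A,B)>0$, argue by contradiction: if $\Vert p_n-q_n\Vert\not\to 0$, pass to a subsequence on which $\Vert p_n-q_n\Vert\geq\epsilon_0>0$, pick $R$ slightly larger than $dist(A,B)$ so that eventually $\Vert p_n-r_n\Vert\leq R$ and $\Vert q_n-r_n\Vert\leq R$, observe that $\frac{p_n+q_n}{2}\in A$ by convexity while $r_n\in B$ so $\Vert\frac{p_n+q_n}{2}-r_n\Vert\geq dist(A,B)$, and then use uniform convexity to get $\Vert\frac{p_n+q_n}{2}-r_n\Vert\leq\big(1-\delta(\epsilon_0/R)\big)R$; choosing $R$ close enough to $dist(A,B)$ makes the right side strictly below $dist(A,B)$, a contradiction. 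The only delicate point — and the main conceptual obstacle — is the very first reduction: one must resist applying the uniform-convexity lemma directly in $X\times X$ (where it fails) and instead use the elementary lower bounds to collapse each coordinate distance to $dist(A,B)$ separately.
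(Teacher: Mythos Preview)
The paper does not supply a proof of this lemma at all; it simply quotes it as \cite[Lemma 2.5]{8} and uses it as a black box. Your argument is correct and is exactly the natural one: use the trivial lower bounds $\Vert x_n-u_n\Vert\geq dist(A,B)$, $\Vert y_n-v_n\Vert\geq dist(A,B)$ (and likewise for $w_n,z_n$) to split each max-norm hypothesis into two coordinate-wise convergences, then invoke the classical Eldred--Veeramani single-space lemma once in $A$ (for $x_n,w_n$ against $u_n$) and once in $B$ (for $y_n,z_n$ against $v_n$). Your remark that $(X\times X,\Vert\cdot\Vert_{\max})$ is never uniformly convex, so the lemma cannot be applied directly in the product, is the right diagnosis and is precisely why the componentwise reduction is needed. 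There is nothing to correct.
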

\begin{theorem}\label{theorem2.10}
Let $A$ and $B$ be nonempty closed and convex subsets of a uniformly convex Banach space $X$. Let $ T:(A \times B) \cup (B \times A) \rightarrow A \cup B$ be a $p$-cyclic $\phi$-contraction mapping. For $(x_{0},y_{0})  \in A \times B $, define $ x_{n} = T(x_{n-1},y_{n-1})$ and $y_{n} = T( y_{n-1},x_{n-1})$ for each  $n \in \mathbb{N}$.  
Then $T$ has a unique coupled best proximity point.
\end{theorem}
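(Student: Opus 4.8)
The plan is to split according to whether $dist(A,B)$ vanishes. If $dist(A,B)=0$, the statement is nothing but Theorem~\ref{theorem2.7}: there $T$ is shown to have a unique coupled fixed point $(x,y)\in A\times B$, and when $dist(A,B)=0$ a coupled fixed point is precisely a coupled best proximity point. So from now on I would assume $dist(A,B)>0$. For existence I would simply quote Theorem~\ref{theorem2.8}: the sequence $\{(x_{2n},y_{2n})\}$ is Cauchy in $A\times B$, and since $A$ and $B$ are closed in the Banach space $X$ the product $A\times B$ is complete, so $(x_{2n},y_{2n})\to(\bar x,\bar y)$ for some $(\bar x,\bar y)\in A\times B$. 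This sequence then has a convergent subsequence in $A\times B$, so Theorem~\ref{theorem2.4} applies and gives $\Vert\bar x-T(\bar x,\bar y)\Vert=dist(A,B)$ and $\Vert\bar y-T(\bar y,\bar x)\Vert=dist(A,B)$, i.e. $(\bar x,\bar y)$ is a coupled best proximity point.

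The substance is the uniqueness part, and the key preliminary fact I would isolate is that every coupled best proximity point $(x^{*},y^{*})$ is a coupled fixed point of ``$T^{2}$'': if $u^{*}:=T(x^{*},y^{*})\in B$ and $v^{*}:=T(y^{*},x^{*})\in A$, then $x^{*}=T(u^{*},v^{*})$ and $y^{*}=T(v^{*},u^{*})$. To prove it, note first that $\Vert(x^{*},y^{*})-(u^{*},v^{*})\Vert=dist(A,B)$ straight from the definition. Feeding the cyclic pair $(u^{*},v^{*})\in B\times A$ and $(x^{*},y^{*})\in A\times B$ into the $p$-cyclic $\phi$-contraction inequality (the same ``across a cyclic pair'' usage already made in the proof of Theorem~\ref{theorem2.4}) and using that the norm on the right equals $dist(A,B)$, the $\phi$-terms cancel and I get $\Vert T(u^{*},v^{*})-u^{*}\Vert\le dist(A,B)$; since $T(u^{*},v^{*})\in A$ and $u^{*}\in B$ this is an equality, and symmetrically $\Vert T(v^{*},u^{*})-v^{*}\Vert=dist(A,B)$. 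Hence $(T(u^{*},v^{*}),T(v^{*},u^{*}))$ and $(x^{*},y^{*})$ are two members of $A\times B$ each at distance $dist(A,B)$ from the point $(u^{*},v^{*})\in B\times A$, so Lemma~\ref{lemma2.9} forces $(T(u^{*},v^{*}),T(v^{*},u^{*}))=(x^{*},y^{*})$.

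With that fact available, I would finish uniqueness by comparing two coupled best proximity points $(x,y)$ and $(x^{*},y^{*})$. Put $u=T(x,y),\ v=T(y,x)$ and $u^{*}=T(x^{*},y^{*}),\ v^{*}=T(y^{*},x^{*})$, so that $x=T(u,v)$, $y=T(v,u)$, $x^{*}=T(u^{*},v^{*})$, $y^{*}=T(v^{*},u^{*})$. Writing $c=\Vert(x,y)-(u^{*},v^{*})\Vert$ and $c'=\Vert(u,v)-(x^{*},y^{*})\Vert$, both of which are $\ge dist(A,B)$ because each coordinate couples a point of $A$ with a point of $B$, I apply the contraction inequality to $(u,v)$ against $(x^{*},y^{*})$ and to $(v,u)$ against $(y^{*},x^{*})$ to obtain $c\le c'-\phi(c')+\phi(dist(A,B))$, and by the symmetric argument $c'\le c-\phi(c)+\phi(dist(A,B))$. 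Adding these and using $\phi(c),\phi(c')\ge\phi(dist(A,B))$ gives $\phi(c)=\phi(c')=\phi(dist(A,B))$, and strict monotonicity of $\phi$ forces $c=dist(A,B)$. Since also $\Vert(x^{*},y^{*})-(u^{*},v^{*})\Vert=dist(A,B)$, one last application of Lemma~\ref{lemma2.9} to $(x,y),(x^{*},y^{*})\in A\times B$ and $(u^{*},v^{*})\in B\times A$ yields $(x,y)=(x^{*},y^{*})$.

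The step I expect to be the real obstacle is the ``coupled $T^{2}$-fixed'' reduction: everything else is bookkeeping, but it is that reduction which upgrades the one-sided $\phi$-contraction estimate into a genuine two-sided comparison between best proximity points, and it has to be squeezed out of the contraction inequality together with the geometric input of Lemma~\ref{lemma2.9}. A second point requiring care throughout is that $\phi$ is only assumed strictly increasing, not continuous, so every limiting argument must be carried out via monotonicity and squeezing, exactly as in Theorems~\ref{theorem2.3} and~\ref{theorem2.6}, rather than by passing $\phi$ through a limit.
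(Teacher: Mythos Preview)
Your proof is correct and follows essentially the same architecture as the paper's: existence via Theorem~\ref{theorem2.8} and Theorem~\ref{theorem2.4}, the ``$T^{2}$-fixed'' reduction for coupled best proximity points via the contraction inequality and Lemma~\ref{lemma2.9}, and then a cross-comparison of two best proximity points finished off by Lemma~\ref{lemma2.9}. Your endgame is in fact slightly cleaner than the paper's---by adding the two inequalities $c\le c'-\phi(c')+\phi(dist(A,B))$ and $c'\le c-\phi(c)+\phi(dist(A,B))$ you get $c=c'=dist(A,B)$ directly, whereas the paper first shows $c=c'$ and then splits into the cases $c=dist(A,B)$ and $c>dist(A,B)$ to reach the same conclusion.
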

\begin{proof}
Using Theorem \ref{theorem2.8}, we deduce that $\{x_{2n},y_{2n})\}$ is a Cauchy sequence in $A \times B$. By using Theorem \ref{theorem2.4}, there exists $(x,y) \in A \times B$ such that 
$$\Vert x-T (x, y) \Vert = dist(A, B) \mbox{ and }\Vert y- T (y, x) \Vert = dist(A, B).$$
Therefore, $\Vert (x,y)-(T(x,y),T(y,x))\Vert=\max\{\Vert x-T (x, y) \Vert,\Vert y- T (y, x) \Vert\}=dist(A,B)$. To establish the uniqueness of coupled best proximity point of $T$, let $(x^*,y^*)$ be another coupled best proximity point of T. Consider
\begin{align*}
  \Vert T(T(x,y),T(y,x))-T(x,y) \Vert & \leq \Vert(T(x,y),T(y,x))-(x,y) \Vert -\phi(\Vert (T(x,y),T(y,x))-(x,y) \Vert \\
  & \quad +\phi(dist(A,B))\\
   & = dist(A,B)- \phi (dist(A,B))+  \phi(dist(A,B))\\\end{align*}
This gives $\Vert T(T(x,y),T(y,x))-T(x,y) \Vert =dist(A,B).$
Similarly, we prove that $\Vert T(T(y,x),T(x,y))-T(y,x) \Vert = dist(A,B)$. This implies that
\begin{align*}
\Vert (T(T(x,y),T(y,x)),T(T(y,x),T(x,y)))-(T(x,y),T(y,x)) \Vert&= \max \{ \Vert T(T(x,y),T(y,x))-T(x,y) \Vert,\\ 
&\quad \quad \Vert T(T(y,x),T(x,y))-T(y,x) \Vert \}\\
&= dist(A,B). 
\end{align*}
Using Lemma \ref{lemma2.9}, it follows that $\Vert (x,y)-(T(T(x,y),T(y,x)),T(T(y,x),T(x,y)))\Vert = 0$. This gives
  $$T(T(x,y),T(y,x))=x \quad \mbox{ and } \quad T(T(y,x),T(x,y)) = y.$$
Similarly, we show that 
$$T(T(x^*,y^*),T(y^*,x^*))=x^* \quad \mbox{ and } \quad T(T(y^*,x^*),T(x^*,y^*)) = y^*.$$   
  Conisder
\begin{align*}
  \Vert T(x,y)-x^* \Vert & = \Vert T(x,y)- T(T( x^*,y^*),T(y^*,x^*)) \Vert\\
&\leq \Vert (x,y)- (T( x^*,y^*),T(y^*,x^*)) \Vert - \phi (\Vert (x,y)- (T( x^*,y^*),T(y^*,x^*)) \Vert)+\phi(dist(A,B)).
\end{align*}
Since $dist(A,B) \leq \Vert (x,y)-(T(x^*,y^*),T(y^*,x^*)) \Vert$ and $\phi$ is a strictly increasing mapping, $\Vert T(x,y)-x^* \Vert \leq \Vert (x,y)- (T( x^*,y^*),T(y^*,x^*))\Vert$. Similarly, we show that $\Vert T(y,x)-y^* \Vert \leq \Vert (y,x)- (T(y^*,x^*),T(x^*,y^*))$. This implies that
\begin{equation}\label{equation2.15}
\Vert (T(x,y),T(y,x))- (x^*,y^*)\Vert \leq \Vert (x,y)- (T( x^*,y^*),T(y^*,x^*)) \Vert.
\end{equation}
Similarly,
\begin{equation}\label{equation2.16}
\Vert (T(x^*,y^*),T(y^*,x^*))- (x,y)\Vert \leq \Vert (x^*,y^*)- (T(x,y),T(y,x)) \Vert.
\end{equation}
From (\ref{equation2.15}) and (\ref{equation2.16}), we conclude that
$$\Vert (x,y)- (T( x^*,y^*),T(y^*,x^*)) \Vert=\Vert (x^*,y^*)- (T(x,y),T(y,x)) \Vert.$$
If $\Vert (x,y)- (T( x^*,y^*),T(y^*,x^*)) \Vert=dist(A,B)$, then as $\Vert (x^*,y^*)- (T(x^*,y^*),T(y^*,x^*)) \Vert=dist(A,B)$. Using Lemma \ref{lemma2.9}, $\Vert (x,y)=(x^*,y^*)\Vert=0$ which implies that $(x,y)=(x^*,y^*)$. 
If $\Vert (x,y)- (T( x^*,y^*),T(y^*,x^*)) \Vert>dist(A,B)$, then $\Vert (x^*,y^*)- (T(x,y),T(y,x)) \Vert>dist(A,B)$.  Consider
\begin{align*}
\Vert(x^*,y^*)-(T(x,y),T(y,x))\Vert&=\max\{\Vert x^*-T(x,y)\Vert, \Vert y^*-T(y,x)\Vert\}\\
& = \max \{ \Vert T(T(x^*,y^*),T(y^*,x^*))-T(x,y)\Vert, \Vert T(T(y^*,x^*),T(x^*,y^*))\\
&\quad -T(y,x)\Vert\}\\
& \leq \Vert (T(y^*,x^*),T(x^*,y^*)-(x,y)\Vert - \phi(\Vert (T(y^*,x^*),T(x^*,y^*)-(x,y)\Vert)\\
& \quad +\phi(dist(A,B))\\
& < \Vert (T(y^*,x^*),T(x^*,y^*)-(x,y)\Vert,
\end{align*}
a contradiction.  Hence, $T$ has a unique coupled best proximity point.
\end{proof}
We observe that \cite[Theorem 2.6]{8} can be easily deduced from Theorem \ref{theorem2.10}.
\begin{corollary}
Let $A$ and $B$ be nonempty closed and convex subsets of a uniformly convex Banach space $X$. Let $ T:(A \times B) \cup (B \times A) \rightarrow A \cup B$ be a $p$-cyclic contraction mapping. For $(x_{0},y_{0})  \in A \times B $, define $ x_{n} = T(x_{n-1},y_{n-1})$ and $y_{n} = T( y_{n-1},x_{n-1})$ for each  $n \in \mathbb{N}$.  
Then $T$ has a unique coupled best proximity point.
\end{corollary}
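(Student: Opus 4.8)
The plan is to obtain this corollary as the immediate specialization $\phi(t) = (1-\lambda)t$ of Theorem~\ref{theorem2.10}, thereby also recovering \cite[Theorem 2.6]{8}. First I would fix $\lambda \in (0,1)$ and set $\phi \colon [0,\infty) \to [0,\infty)$, $\phi(t) = (1-\lambda)t$; since $1-\lambda > 0$, this $\phi$ is strictly increasing, maps $[0,\infty)$ into $[0,\infty)$, and hence is an admissible choice in the definition of a $p$-cyclic $\phi$-contraction mapping.

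Next I would verify that any $p$-cyclic contraction mapping $T$ is a $p$-cyclic $\phi$-contraction mapping for this $\phi$. Condition (i) of \cite[Definition 2.1]{8} and condition (i) of the definition of a $p$-cyclic $\phi$-contraction mapping are literally the same ($T(A,B) \subseteq B$ and $T(B,A) \subseteq A$). For condition (ii), given pairs $(x_1,y_1)$ and $(x_2,y_2)$ in $(A\times B)\cup(B\times A)$ and writing $d = \Vert (x_1,y_1) - (x_2,y_2) \Vert$, one has
$$\lambda d + (1-\lambda)\,dist(A,B) = d - (1-\lambda)d + (1-\lambda)\,dist(A,B) = d - \phi(d) + \phi(dist(A,B)),$$
so the contraction estimate $\Vert T(x_1,y_1) - T(x_2,y_2) \Vert \le \lambda d + (1-\lambda)\,dist(A,B)$ is exactly the $p$-cyclic $\phi$-contraction estimate. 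Thus $T$ satisfies all the hypotheses of Theorem~\ref{theorem2.10}.

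Finally I would apply Theorem~\ref{theorem2.10} to $T$ together with the iterates $x_n = T(x_{n-1},y_{n-1})$, $y_n = T(y_{n-1},x_{n-1})$ to conclude that $T$ has a unique coupled best proximity point. I do not expect any genuine obstacle: the only point worth a line of care is that the inequality in \cite[Definition 2.1]{8} holds for all pairs in the domain, which is a stronger hypothesis than what is invoked anywhere in the chain Theorem~\ref{theorem2.3}--Theorem~\ref{theorem2.10}, so every step used in the proof of Theorem~\ref{theorem2.10} remains valid verbatim. Hence the corollary follows.
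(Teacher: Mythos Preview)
Your proposal is correct and follows exactly the paper's own approach: define $\phi(t)=(1-\lambda)t$ and invoke Theorem~\ref{theorem2.10}. Your write-up simply spells out the identity $\lambda d + (1-\lambda)\,dist(A,B) = d - \phi(d) + \phi(dist(A,B))$ that the paper leaves implicit.
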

\begin{proof}
Define $\phi:[0,\infty) \rightarrow [0,\infty)$ by $\phi(t)=(1-\lambda)t$, where $\lambda\in (0,1)$. Then the desired result follows from Theorem \ref{theorem2.10}. 
\end{proof}
Now, we introduce a new class of mappings.  
\begin{definition}
Let $(A,B)$ be a nonempty pair in a normed space $X$. A mapping $ T: (A \times B)\cup (B \times A) \rightarrow A \cup B$ is said to be $p$-cyclic Kannan nonexpansive mapping if

(i) $T(A,B) \subseteq B$ and $T(B,A) \subseteq A$,
 
(ii) $\Vert T(x_{1},y_{1}) - T(x_{2},y_{2}) \Vert \leq \frac{1}{2} [\Vert (x_{1},y_{1}) -  (T(x_{1},y_{1}),  T(y_{1},x_{1}))\Vert + \Vert  (x_{2},y_{2}) - (T(x_{2},y_{2}), (T(y_{2},x_{2})) \Vert$.      \end{definition}
\begin{theorem}\label{theorem2.12}
 Let $(A,B)$ be a nonempty, weakly compact and convex pairs subsets of a normed linear space $X$. Let $T: (A \times B)\cup (B \times A) \rightarrow A \cup B$ be a $p$-cyclic Kannan nonexpansive mapping. Suppose that
$$\Vert (T(x,y), T(y,x)) - (T(T(x,y), T(y,x)), T(T(y,x), T(x,y))) \Vert < \Vert (x,y) - (T(x,y), T(y,x)) \Vert$$ with $dist(A,B)< \Vert (x,y) - (T(x,y), T(y,x)) \Vert$. Then $T$ has a coupled best proximity point.
\end{theorem}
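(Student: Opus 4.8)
The plan is to reduce the statement to Gabeleh's best proximity theorem for ordinary cyclic Kannan nonexpansive mappings, \cite[Theorem 3.1]{17}, applied on the product space $X \times X$ equipped with the norm $\Vert (x,y) \Vert = \max\{\Vert x \Vert , \Vert y \Vert\}$ already defined above. Introduce the companion map
$$\mathcal{T} : (A \times B) \cup (B \times A) \to (A \times B) \cup (B \times A), \qquad \mathcal{T}(x,y) = (T(x,y),\, T(y,x)).$$
By condition (i) of the $p$-cyclic Kannan definition, $T(A,B) \subseteq B$ and $T(B,A) \subseteq A$, so $\mathcal{T}(A \times B) \subseteq B \times A$ and $\mathcal{T}(B \times A) \subseteq A \times B$; hence $\mathcal{T}$ is a genuine cyclic self-map of the pair $(A \times B,\, B \times A)$ in $X \times X$. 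First I would record the routine facts that $A \times B$ and $B \times A$ are nonempty and convex, that they are weakly compact in $X \times X$ (a product of weakly compact convex sets is again weakly compact and convex), and that $dist(A \times B, B \times A) = dist(A,B)$ in the max-norm, the nontrivial inequality $dist(A \times B, B \times A) \le dist(A,B)$ being obtained by choosing nearly optimal pairs in the two coordinates independently.

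The key point is the elementary identity
$$\Vert (y,x) - \mathcal{T}(y,x) \Vert = \max\{\Vert y - T(y,x) \Vert,\, \Vert x - T(x,y) \Vert\} = \Vert (x,y) - \mathcal{T}(x,y) \Vert,$$
that is, the displacement of $\mathcal{T}$ is invariant under swapping the two coordinates. Granting this, I would verify that $\mathcal{T}$ is cyclic Kannan nonexpansive on $(A \times B, B \times A)$: applying condition (ii) of the $p$-cyclic Kannan definition once to the pair $(x_1,y_1),(x_2,y_2)$ and once to the swapped pair $(y_1,x_1),(y_2,x_2)$ produces two inequalities whose right-hand sides coincide by the identity above, and taking the maximum of the two left-hand sides gives
$$\Vert \mathcal{T}(x_1,y_1) - \mathcal{T}(x_2,y_2) \Vert \le \tfrac12\big[\Vert (x_1,y_1) - \mathcal{T}(x_1,y_1) \Vert + \Vert (x_2,y_2) - \mathcal{T}(x_2,y_2) \Vert\big].$$
Similarly, since $\mathcal{T}^2(x,y) = \big(T(T(x,y),T(y,x)),\, T(T(y,x),T(x,y))\big)$, the displacement hypothesis in the statement is exactly the assertion that $\Vert \mathcal{T}u - \mathcal{T}^2 u \Vert < \Vert u - \mathcal{T}u \Vert$ for every $u \in (A\times B)\cup(B\times A)$ with $dist(A\times B, B\times A) < \Vert u - \mathcal{T}u \Vert$ (the swap identity shows it is enough to impose this for $u \in A\times B$, which is how it is phrased).

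With these verifications in hand, \cite[Theorem 3.1]{17} applies to $\mathcal{T}$ and produces a point $u^* \in (A \times B) \cup (B \times A)$ with $\Vert u^* - \mathcal{T}u^* \Vert = dist(A \times B, B \times A) = dist(A,B)$. Writing $u^* = (p,q)$, the equality $\max\{\Vert p - T(p,q) \Vert, \Vert q - T(q,p) \Vert\} = dist(A,B)$ together with the trivial bounds $\Vert p - T(p,q) \Vert \ge dist(A,B)$ and $\Vert q - T(q,p) \Vert \ge dist(A,B)$ forces $\Vert p - T(p,q) \Vert = \Vert q - T(q,p) \Vert = dist(A,B)$; hence $(p,q)$ is a coupled best proximity point of $T$ when $u^* \in A \times B$, and $(q,p) \in A \times B$ is one when $u^* \in B \times A$. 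The only genuine work is in the middle paragraph — checking that both the Kannan inequality and the displacement hypothesis pass verbatim to $\mathcal{T}$ under the max-norm — and this is where I expect the (modest) main obstacle to lie; it rests entirely on the coordinate-swap identity, everything else being bookkeeping. As an alternative, one could mimic Gabeleh's proof line by line, replacing $x_n$ by $(x_n,y_n)$ and $\Vert \cdot \Vert$ by the max-norm, but the reduction above is considerably shorter.
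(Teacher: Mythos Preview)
Your reduction is correct and is a genuinely different route from the paper's. The paper does exactly what you mention as the alternative in your last sentence: it reproves Gabeleh's argument line by line in the coupled setting. It applies Zorn's lemma to the family $\Gamma$ of nonempty weakly compact convex subpairs $(G,H)$ of $(A,B)$ on which $T$ remains $p$-cyclic, extracts a minimal pair $(E_1,E_2)$, fixes $(x,y)\in E_1\times E_2$ with $r=\Vert(x,y)-(T(x,y),T(y,x))\Vert$, and, assuming $r>dist(A,B)$, builds the level sets $W_1\subseteq E_1\times E_2$, $W_2\subseteq E_2\times E_1$ and the closed convex hulls $V_i=\overline{\mathrm{con}}(T(W_{3-i}))$; after showing $V_1\times V_2\subseteq W_1$ and $V_2\times V_1\subseteq W_2$, minimality forces $V_i=E_i$ and hence $\Vert(u,v)-(T(u,v),T(v,u))\Vert=r$ throughout, contradicting the strict displacement hypothesis.

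By contrast, your approach packages the pair $(T(x,y),T(y,x))$ as a single cyclic map $\mathcal{T}$ on $(A\times B,\,B\times A)$ in $(X\times X,\Vert\cdot\Vert_{\max})$ and invokes \cite[Theorem~3.1]{17} as a black box. The swap identity $\Vert(y,x)-\mathcal{T}(y,x)\Vert=\Vert(x,y)-\mathcal{T}(x,y)\Vert$ is exactly the observation needed to transport both the Kannan inequality and the displacement hypothesis to $\mathcal{T}$, and the verification that $A\times B$ is weakly compact in $X\times X$ (via $(X\times X)^*\cong X^*\times X^*$ and Tychonoff) is standard. Your argument is shorter and makes transparent that the coupled theorem is a formal corollary of the single-map theorem; the paper's direct proof is more self-contained but repeats the Zorn machinery in full.
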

\begin{proof}
Let $\Gamma$ denote the collection of all nonempty, weakly compact and convex pairs $(G,H)$ which are subsets of $(A,B)$ and $T$ is $p$-cyclic on $(G \times H) \cup (H \times G)$.   Since $(A,B) \in \Gamma$, $\Gamma$ is nonempty. By using Zorn's lemma $\Gamma$ has a minimal elements say $(E_{1}, E_{2})$ with respect to reverse inclusion relation. Suppose $(x,y) \in E_{1} \times E_{2}$ and $ \Vert(x,y)-(T(x,y),T(y,x))\Vert = r$. If $r = dist(A,B)$, then $(x,y)$ is a coupled best proximity points of $T$. So, assume that $r > dist(A,B)$. Let 
$$W_{1} = \{(u_1,u_2)\in E_{1} \times E_{2} : \Vert (u_1,u_2)-(T(u_1,u_2),T(u_2,u_1)) \Vert \leq r \}.$$ As $\Vert (x,y)-(T(x,y),T(y,x)) \Vert= r$, $W_{1}$ is nonempty.  If $ \tilde{y}=T(x,y)$, then $\tilde{y}\in E_{2}$ and  $ \tilde{x}=T(y,x)$, then $\tilde{x} \in E_{1}$. Consider 
\begin{align*}
 \Vert( \tilde{y},\tilde{x}) -(T(\tilde{y},\tilde{x}),T(\tilde{x},\overline{y})) \Vert & \
    = \Vert (T(x,y),T(y,x)) - (T(T(x,y),T(y,x)), T(T(y,x),T(x,y)) \Vert \\
& = \max \{\Vert T(x,y) - T(T(x,y), T(y,x)) \Vert, \Vert T(y,x) - T(T(y,x),T(x,y)) \Vert \}.
\end{align*}
Consider 
\begin{align*}
\Vert T(x,y) - T(T(x,y),T(y,x)) \Vert & \leq \frac{1}{2} \Big [\Vert (x,y) - (T(x,y), T(y,x)) \Vert  + \Vert (T(x,y), T(y,x))\\
& \quad  - T(T(x,y), T(y,x)),T(T(y,x), T(x,y)) \Vert \Big]. 
 \end{align*} 
Also, 
\begin{align*}
\Vert T(y,x)-T(T(y,x),T(x,y)) \Vert & \leq \frac{1}{2} \Big [\Vert (y,x) - (T(y,x), T(x,y)) \Vert+ \Vert (T(y,x), T(x,y))  \\ & \quad - (T(T(y,x), T(x,y)),T(T(x,y), T(y,x)) \Vert \Big ]. 
\end{align*}
This implies that
\begin{align*}
\Vert( \tilde{y},\tilde{x}) -(T(\tilde{y},\tilde{x}),T(\tilde{x},\overline{y})) \Vert & \leq \frac{1}{2} \Big [\Vert (x,y)-(T(x,y),T(y,x)) \Vert + \Vert (T(x,y),T(y,x))\\
& \quad-(T(T(x,y),T(y,x)),T(T(y,x),T(x,y)))\Vert\\
   & \leq \frac{1}{2} \Big [r+ \Vert (\tilde{y},\tilde{x})-(T(\tilde{y}, \tilde{x}),T(\tilde{x}, \tilde{y})) \Vert\Big].
\end{align*}  
This gives 
\begin{equation}\label{equation2.17}
\Vert( \tilde{y},\tilde{x}) -(T(\tilde{y},\tilde{x}),T(\tilde{x},\overline{y})) \Vert
 \leq r.
 \end{equation}  Let 
$$W_{2} =  \{(v_{1}, v_{2}) \in E_{2}\times E_{1} : \Vert (v_{1}, v_{2}) -(T(v_{1}, v_{2}), T(v_{2}, v_{1})) \leq r \Vert \}.$$
As $ (\overline{y},\overline{x}) \in W_{2}$, $W_{2}$ is nonempty. Let $ V_{1} = \overline{\mbox{con}}(T(W_{2}))$ and   $V_{2} = \overline{\mbox{con}}(T(W_{1}))$. Now, we prove that $ V_{1}\times V_{2} \subseteq W_{1}$. Let $ (u,v)\in V_{1}\times V_{2}$. Then $u \in \overline{\mbox{con}}(T(W_{2})) $ and $ v \in \overline{\mbox{con}}(T(W_{1}))$. Since $u \in \overline{\mbox{con}}(T(W_{2})$, there exists  $(c_{1},d_{1}),(c_{2},d_{2}), \ldots, (c_{n},d_{n}) \in W_{2}$ and $t_{1},t_{2}, \ldots ,t_{n} \in [0,1]$ with  $\sum\limits_{i=1}^n t_{i} =1$ and  
\begin{equation*}
    \Vert u - \sum\limits_{i=1}^n t_{i} T(c_{i}, d_{i}) \Vert <  \epsilon.
\end{equation*}
    Since $ v \in \overline{\mbox{con}}(T(W_{1}))$, there exists  $(e_{1},f_{1}),(e_{2},f_{2}),\ldots,(e_{m},d_{m}) \in W_{1}$ and $s_{1}, s_{2},\ldots , s_{m} \in [0, 1] $ with $ \sum\limits_{i=1}^m s_{i}=1$ and 
  \begin{equation*}
    \Vert v - \sum\limits_{i=1}^m s_{i} T(e_{i}, f_{i}) \Vert \leq \epsilon.  
  \end{equation*}
 Consider 
\begin{align*}
 \Vert u-T(u,v) \Vert & \leq \Big\Vert u -\sum\limits_{i=1}^n t_{i} T(c_{i},d_{i}) \Big\Vert + \Big\Vert \sum\limits_{i=1}^n t_{i} T(c_{i},d_{i}) - T(u,v) \Big\Vert \\
 & < \epsilon + \Big\Vert \sum\limits_{i=1}^n t_{i} T(c_{i},d_{i}) - \sum\limits_{i=1}^n t_{i} T(u,v) \Big\Vert\\
 & \leq \epsilon +  \sum\limits_{i=1}^n t_{i} \Vert T(c_{i},d_{i}) - T(u,v)\Vert\\
 & \leq \epsilon + \sum\limits_{i=1}^n \frac{t_{i}}{2} [\Vert (c_{i}, d_{i}) - (T(c_{i}, d_{i}), T(d_{i}, c_{i})) \Vert + \Vert (u,v) - (T(u,v),T(v,u)) \Vert]\\
 & \leq \epsilon + \frac{1}{2} \sum\limits_{i=1}^n t_{i} (r+ \Vert (u,v) - (T(u,v), T(v,u)) \Vert\\
 &= \epsilon + \frac{r}{2} + \frac{1}{2} \Vert (u,v) - (T(u,v), T(v,u)) \Vert. 
\end{align*}
Similarly, we show that 
\begin{equation*}
   \Vert v-T(v,u) \Vert < \epsilon + \frac{r}{2}
+ \frac{1}{2} \Vert (u,v) - (T(u,v), T(v,u)) \Vert. 
\end{equation*}
 Therefore,
  $$ \Vert (u,v) - (T(u,v), T(v,u)) \Vert < \epsilon + \frac{r}{2}
+ \frac{1}{2} \Vert (u,v) - (T(u,v), T(v,u)) \Vert$$ which implies that
$$\Vert (u,v) - (T(u,v), T(v,u)) \Vert < 2\epsilon + 2.$$ Since $\epsilon$ is arbitrary, $\Vert (u,v)-(T(u,v), T(v,u)) \Vert < r$ which gives $(u,v) \in W_{1}$. Therefore, $V_{1} \times V_{2} \subseteq W_{1}$. Similarly, we can show $V_{2} \times V_{1} \subseteq W_{2}$.
 Since $ V_{1} \times V_{2} \subseteq W_{1}$, $T(V_{1} \times  V_{2}) \subseteq T(W_{1}) \subseteq \overline{\mbox{cov}} (T(W_{1})) = V_{2}$. Also  $ V_{2} \times V_{1} \subseteq W_{2}$, $ T(V_{2} \times V_{1}) \subseteq T(W_{2}) \subseteq \overline{\mbox{con}} (T(W_{2})= V_{1}$. It follows that $T$ is $p$-cyclic on $(V_{1} \times V_{2}) \cup (V_{2} \times V_{1})$ and  $(E_{1}, E_{2}) $ is minimal element which gives that $V_{1} = E_{1}$ and $V_{2} = E_{2}$. Therefore, $ E_{1} \times E_{2} = V_{1} \times V_{2} \subseteq W_{1} \subseteq E_{1} \times E_{2}$ which implies that $V_{1} \times V_{2} = W_{1}$. Similarly, we have $V_{2} \times V_{1} = W_{2}$. As $ (x,y) \in E_{1} \times E_{2} $ is an arbitrary element, 
  \begin{equation*}
      \Vert (x,y) - (T(x,y), T(y,x)) \Vert = r \mbox{ for all }  (x,y)\in E_{1}\times E_{2}
  \end{equation*}
  For each $(\tilde{y},\tilde{x}) \in E_2 \times E_1$, $T(\tilde{y},\tilde{x}) \in E_1$ and $T(\tilde{x},\tilde{y}) \in E_2$ we have 
  \begin{align*}
  r &=  \Vert (T(\tilde{y}, \tilde{x}), T(\tilde{x}, \tilde{y}))-(T(T(\tilde{y}, \tilde{x}), T(\tilde{x},\tilde{y}),T(T(\tilde{x}, \tilde{y}), T(\tilde{y},\tilde{x})))\Vert\\
  & \leq \frac{1}{2}\Big [ \Vert(\tilde{y},\tilde{x})-(T(\tilde{y}, \tilde{x}), T(\tilde{x}, \tilde{y}))\Vert+ \Vert ( \tilde{x},\tilde{y})-(T(\tilde{x}, \tilde{y}), T(\tilde{y}, \tilde{x}))\Vert \Big]\\
  & \leq  \frac{1}{2}[\Vert(\tilde{y},\tilde{x})-(T(\tilde{y}, \tilde{x}), T(\tilde{x}, \tilde{y}))\Vert+r].
  \end{align*}
Using (\ref{equation2.17}), it follows that $\Vert(\tilde{y},\tilde{x})-(T(\tilde{y}, \tilde{x}), T(\tilde{x}, \tilde{y}))\Vert= r$ for each $ (\tilde{y},\tilde{x}) \in E_{2} \times E_{1} $. This implies that $r = \Vert (T(x,y), T(y,x)) - (T(T(x,y), T(y,x)), T(T(y,x), T(x,y))) \Vert < \Vert (x,y) - (T(x,y), T(y,x)) \Vert = r$, a contradiction. 
\end{proof}
It is noted that the coupled best proximity point established in the Theorem \ref{theorem2.12} is not necessarily unique, as demonstrated by the following example:
\begin{example}
\emph{Consider the nonreflexive Banach space $l_{1}$ and $\{e_{n}\}$ be the canonical basis of $l_{1}$. Suppose $A = \overline{\mbox{con}}(\{ e_{2n-1}+ e_{2n} : n \in \mathbb{N} \})$, and  $B = \overline{\mbox{con}}(\{ e_{2n}+ e_{2n+1} : n \in \mathbb{N} \}) $. Then $ (A,B) $ is a bounded, closed and convex sets in $l_{1}$ so that $dist(A,B) = 2$. Indeed, for $ x\in A$, we have $$x = \sum\limits_{i=1}^k t_{i}( e_{{2n_i-1}} + e_{{2n_i}}), \mbox{ where } t_{i} \geq 0 \mbox{ and } \sum\limits_{i=1}^k t_{i} = 1$$ for $ y \in B$, we have $$y = \sum\limits_{j=1}^l s_{j}(\
    e_{{2n_j}} + e_{{2n_j+1}}) \in B \mbox{ where }  s_{j} \geq 0 \mbox{ and }  \sum\limits_{j=1}^l s_{j} = 1.$$
    Define $T:(A \times B \cup (B \times A) \rightarrow A \cup B$ by   
\[  T(x,y) =   \left \{ \begin{array}{rcl} e_{2}+e_{3} & \mbox{ if } (x,y) \in A \times B
         &  \\ e_{1}+e_{2} & \mbox{ if } (x,y) \in B\times A
         & 
    \end{array}\right.
    \]  
 Now, we prove that $T$ is a $p$-cyclic Kannan nonexpansive mapping. Clearly $T(A,B) \subseteq B $ and   $T(B,A) \subseteq A $. We need to show that   
     \begin{equation}\label{equation2.18}
      \Vert T(x_{1},y_{1})-T(x_{2},y_{2}) \Vert  \leq \frac{1}{2} \Big[ \Vert (x_{1},y_{1})-(T(x_{1},y_{1}), T(y_{1},x_{1}))+(x_{2},y_{2})-(T(x_{2},y_{2}), T(y_{2},x_{2}))\Big].
     \end{equation}
If $(x_{1},y_{1}) \in A \times B $ and $ (x_{2},y_{2}) \in A \times B$, then $T(x_{1},y_{1})=e_2+e_3= T(x_{2},y_{2})$ and (\ref{equation2.18}) holds trivially. Similarly, if $(x_{1},y_{1}) \in B \times A$ and $(x_{2},y_{2}) \in B \times A$, then also (\ref{equation2.18}) holds trivially.
If $(x_{1},y_{1}) \in A \times B$  and  $(x_{2},y_{2}) \in B \times A$, then $\Vert T(x_{1},y_{1})-T(x_{2},y_{2}) \Vert = \Vert e_{2}+e_{3}-(e_{1}+e_{2}) \Vert = 2$. Also, $\Vert x_{1}-T(x_{1},y_{1}) \Vert = \Vert x_{1}-(e_{2}+e_{3}) \Vert \leq 4$ and  $\Vert y_{1}-T(y_{1},x_{1}) = \Vert y_{1}-(e_{1}+e_{2}) \Vert \leq 4$. This implies that (\ref{equation2.18}) holds. For $(x,y) \in A \times B$,we have $\Vert (T(x,y)-T(T(x,y), T(y,x))\Vert = \Vert e_{2}+e_{3}-T(e_{2}+e_{3},e_{1}+e_{2}) \Vert = \Vert e_{2}+e_{3}-(e_{1}+e_{2}) \Vert = 2$ and 
$\Vert T(y,x)-T(T(y,x),T(x,y)) \Vert=\Vert e_1+e_2-T(e_1+e_2,e_2+e_3) = \Vert e_{1}+e_{2}-(e_{2}+e_{3}) \Vert = 2$. Also,  
$\Vert (x,y)-(T(x,y),T(y,x)) \Vert = \max \big \{ \Vert x-T(x,y) \Vert, \Vert y-T(y,x) \Vert \} \leq 4$. This gives
$\Vert (T(x,y), T(y,x))-(T(T(x,y), T(y,x)), T(T(y,x), T(x,y))) \Vert  < \Vert (x,y)-(T(x,y), T(y,x))-(T(T(x,y) \Vert$ with  $dist(A,B) < \Vert (x,y)-(T(x,y), T(y,x))-(T(T(x,y) \Vert$. Therefore, all the conditions of Theorem \ref{theorem2.12} are satisfied which yields the existence of a coupled best proximity point of $T$. We observe that 
\begin{align*}
 \Vert (e_{1}+e_{2}-T(e_{1}+e_{2},e_{2}+e_{3})\Vert = \Vert (e_{1}+e_{2}-(e_{2}+e_{3}) \Vert = 2 = dist(A,B),\\ 
 \Vert e_{2}+e_{3}-T(e_{2}+e_{3},e_{1}+e_{2}) \Vert = \Vert e_{2}+e_{3}-(e_{1}+e_{2}) \Vert = 2 = dist(A,B).   
\end{align*}
Therefore, $(e_1+e_2,e_2+e_3)$ is a coupled best proximity point of $T$. Also, $\Big\Vert \frac{1}{2}(e_{1}+e_{2})+\frac{1}{2}(e_{3}+e_{4})-T(\frac{1}{2}(e_{1}+e_{2})+\frac{1}{2}(e_{3}+e_{4}), (e_{2}+e_{3})) \Big\Vert = \Big\Vert \frac{1}{2}(e_{1}+e_{2})+\frac{1}{2}(e_{3}+e_{4})-(e_2+e_3)\Big\Vert =2 = dist(A,B)$ and $\Big\Vert e_{2}+e_{3}-T(e_{2}+e_{3}, \frac{1}{2}(e_{1}+e_{2})+\frac{1}{2}(e_{3}+e_{4})) \Big\Vert = \Vert e_{2}+e_{3}-(e_{1}+e_{2}) \Vert = 2 = dist(A,B)$. This implies that $(\frac{1}{2}(e_{1}+e_{2})+\frac{1}{2}(e_{3}+e_{4}), (e_{2}+e_{3}))$  is also a coupled best proximity points of $T$.}
\end{example} 

\section*{Acknowledgements}
The first author is grateful to the Government of India for providing financial support in the form of Institutional Fellowship.

\end{document}